\documentclass[11pt]{article}
\usepackage{amsmath,amssymb,amsthm,amscd,esint}

\numberwithin{equation}{section}

\pagestyle{plain}
\setlength{\oddsidemargin}{12pt}
\setlength{\evensidemargin}{12pt}
\setlength{\textwidth}{15cm}
\setlength{\textheight}{21.5cm}

\DeclareMathOperator{\codim}{Codim}

\DeclareMathOperator{\vol}{vol}
\DeclareMathOperator{\dist}{dist}
\DeclareMathOperator{\tr}{tr}

\DeclareMathOperator{\diam}{diam}

\DeclareMathOperator{\Ric}{Ric}

\newtheorem{prop}{Proposition}[section]
\newtheorem{theo}[prop]{Theorem}
\newtheorem{lemm}[prop]{Lemma}
\newtheorem{clai}[prop]{Claim}
\newtheorem{coro}[prop]{Corollary}
\newtheorem{rema}[prop]{Remark}

\def\begeq{\begin{equation}}
\def\endeq{\end{equation}}

\begin{document}

\title{Convergence of K\"ahler-Ricci flow on lower dimensional\\ algebraic manifolds of general type}

\author{
Gang Tian\thanks{Supported partially by
NSF and NSFC grants. Email: tian@math.princeton.edu}\\
Beijing University and Princeton University
\\[5pt]
Zhenlei Zhang\thanks{Supported partially by NSFC 11431009 and Chinese Scholarship Council. Email: zhleigo@aliyun.com}\\
Capital Normal University}
\date{}

\maketitle

\begin{abstract}
In this paper, we prove that the $L^4$-norm of Ricci curvature is uniformly bounded along a K\"ahler-Ricci flow on any minimal algebraic manifold. As an application, we show that on any minimal algebraic manifold $M$ of general type and with dimension $n\le 3$,
any solution of the normalized K\"ahler-Ricci flow converges to the unique singular K\"ahler-Einstein metric on the canonical model of $M$ in the Cheeger-Gromov topology.
\end{abstract}

%\tableofcontents

%%%%%%%%%%%%%%%%%%%%%%%%%%%%%%%%%%%%%%%%%%%%%%%%%%%%%%%%%%%%%%%%%%%%%%%%%%%%%%%%%%%%%%%%%%%%%%%%%%%%%%%%%%%%%%%%%%%%%%%

%%%%%%%%%%%%%%%%%%%%%%%%%%%%%%%%%%%%%%%%%%%%%%%%%%%%%%%%%%%%%%%%%%%%%%%%%%%%%%%%%%%%%%%%%%%%%%%%%%%%%%%%%%%%%%%%%%%%%%%

\section{Introduction}

The purpose of this note is to prove the following

\begin{theo}\label{Main Thm}
Let $M$ be a smooth minimal model of general type with dimension $n\le 3$ and $\omega(t)$ be a solution to the normalized K\"ahler-Ricci flow
\begin{equation}
\frac{\partial}{\partial t}\omega=-\Ric-\omega.
\end{equation}
Then $(M,\omega(t))$ converges in the Cheeger-Gromov sense to the unique singular K\"ahler-Einstein metric on the canonical model of $M$.
\end{theo}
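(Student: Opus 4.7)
The plan is to bootstrap the $L^4$-Ricci bound established in the paper into a genuine Cheeger--Gromov limit, and then identify that limit with the canonical model equipped with its unique singular K\"ahler--Einstein metric. We first collect the ingredients that are already available on a minimal $M$ of general type: the normalized flow exists for all $t\ge 0$; the scalar curvature is uniformly bounded; Perelman's non-collapsing and a uniform diameter bound hold along $\omega(t)$; and, by the work of Tsuji and Tian--Zhang, the K\"ahler potentials converge in $C^0$ to a limit $\varphi_\infty$ such that $\omega(t)\to \pi^\ast\omega_{KE}$ smoothly on compact subsets of $M\setminus E$, where $\pi:M\to X$ is the pluricanonical contraction to the canonical model $X$, $E\subset M$ is its exceptional locus, and $\omega_{KE}$ is the singular K\"ahler--Einstein current on $X$ whose existence and uniqueness is due to Eyssidieux--Guedj--Zeriahi and Berman--Guenancia.

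Next I would feed the main $L^4$ estimate of the paper into the compactness machinery for metrics with integral Ricci bounds. Since $\dim_{\CC} M=n\le 3$, the real dimension is $2n\le 6$, so $p=4$ is strictly larger than $n_{\RR}/2$; combined with non-collapsing, volume bounds, and diameter bounds, the results of Petersen--Wei / Tian--Wang (the $L^p$ analogue of Cheeger--Colding) apply. Thus along any sequence $t_k\to\infty$ one extracts a Gromov--Hausdorff subsequential limit $(M,\omega(t_k))\to (Y,d_\infty)$, together with a quantitative regular/singular decomposition $Y=\cR\sqcup \cS$ in which the singular set has large Hausdorff codimension and $\cR$ carries a smooth Einstein metric $g_\infty$. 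An $\varepsilon$-regularity argument of Cheeger--Naber type then upgrades the convergence on $\cR$ to smooth Cheeger--Gromov convergence after pulling back by suitable diffeomorphisms.

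To identify the limit, one matches the regular part of $Y$ with $X\setminus \sing(X)$: the smooth convergence of $\omega(t)$ to $\pi^\ast\omega_{KE}$ already established on compact subsets of $M\setminus E$ gives an isometric embedding of $(X\setminus \sing(X),\omega_{KE})$ into $\cR$, and the $L^4$-Ricci bound forces the complement to sit inside the singular set $\cS$, which has measure zero. By the uniqueness of the singular KE metric on $X$, the metric completion of the regular part of $Y$ is isometric to $(X,d_{KE})$. In particular the subsequential Cheeger--Gromov limit is independent of $\{t_k\}$, so the full flow converges, proving the theorem.

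The main obstacle is the third step, namely showing that the Gromov--Hausdorff limit $Y$ is actually the canonical model rather than some auxiliary Einstein space. Concretely, one must show (i) that the neighborhoods of the exceptional locus $E$ collapse to a set that agrees metrically with $\pi(E)\subset X$, and (ii) that no extra ``bubbles'' or bubble trees form off of $E$ that would enlarge $Y$ beyond $X$. Step (i) rests on volume estimates $\int_{E}\omega(t)^{\dim E}\to 0$ together with the $L^4$-Ricci bound to prevent unbounded concentration of curvature away from $E$; step (ii) is the place where the dimensional restriction $n\le 3$ is essential, since for larger $n$ the exponent $4$ no longer exceeds $n_{\RR}/2$ and the Cheeger--Colding regularity theory breaks down. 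Once both are handled the identification with $(X,\omega_{KE})$ is forced by the pluripotential uniqueness statement.
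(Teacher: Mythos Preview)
Your outline captures the right overall architecture---use the $L^4$ Ricci estimate to invoke an $L^p$ version of Cheeger--Colding theory, then identify the limit with the canonical model via the smooth convergence on $M\setminus E$ and the uniqueness of the singular KE metric---but there are two genuine gaps in the way you organize the inputs.

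First, you list ``Perelman's non-collapsing and a uniform diameter bound'' among the ingredients that are already available along $\omega(t)$. Neither is known a priori on a minimal model of general type. In the paper these are \emph{outputs}, not inputs, of the argument. One first observes that the $L^4$ estimate is a space--time bound $\int_t^{t+1}\int_M|\Ric|^4\le C$, so one can only find a sequence of \emph{good} times $t_i'$ at which $\int_M|\Ric+\omega|^{7/2}\to 0$ (note: $7/2>m/2$, and one needs the norm to tend to zero, not merely to stay bounded, for the $L^p$ regularity theory in Theorem~2.3; the $L^4$ norm itself does not go to zero). At these good times, noncollapsing at a single regular point $x_0\in M\setminus E$ (from smooth convergence there) plus Petersen--Wei volume comparison gives a pointed Gromov--Hausdorff limit. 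That limit is then identified with the metric completion of $(M\setminus E,\omega_\infty)$, and it is Song's theorem that this completion is \emph{compact} which finally yields a diameter bound at the $t_i'$. Only then can one run Perelman's $\mu$-functional argument (which needs bounded diameter, scalar curvature, and Sobolev constant at the initial slice) to get uniform noncollapsing for all $t$. Treating these as starting hypotheses short-circuits the actual content of the proof.

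Second, because the $L^4$ bound is only a time-averaged bound, you cannot directly extract a Cheeger--Gromov limit along an \emph{arbitrary} sequence $t_k\to\infty$ as you claim. The paper handles this by a distance-distortion estimate (the Claim inside Proposition~3.15): given any $t_j$, pick a nearby good time $t_j'$ with $t_j-t_j'\to 0$, and show that the identity map is a $\delta_j$-Gromov--Hausdorff approximation between $(M,\omega(t_j'))$ and $(M,\omega(t_j))$, using the locally bounded Ricci curvature away from $E$ and Perelman's distance derivative lemma. Your plan does not contain a mechanism for passing from good times to all times, and without it the full-flow convergence (as opposed to subsequential at special times) is not established.
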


Here, by a smooth minimal model, we mean an algebraic manifold $M$ with nef canonical bundle $K_M$.
The theorem should remain true in higher dimensional case; cf. Conjecture 4.1 in \cite{So14-2}. Assuming the uniform bound of Ricci curvature, the conjecture is confirmed by Guo \cite{Gu15}. On the other hand, it has been known since Tsuji \cite{Ts88} and Tian-Zhang \cite{TiZh06} the convergence of the K\"ahler-Ricci flow in the current sense and the smooth convergence on the ample locus of the canonical class.

Applying the $L^2$ bound of Riemannian curvature (cf. Section 3) and K\"ahler-Einstein condition, we can say more about the limit singular space $M_\infty$.
When $n=2$, by a classical argument of removing isolated singularities following Anderson \cite{An89}, Bando-Kasue-Nakajima \cite{BKN} and Tian \cite{Ti90},
we can show that the limit is a smooth K\"ahler-Einstein orbifold with finite orbifold points. When $n=3$, by the argument of Cheeger-Colding-Tian \cite{ChCoTi} or Cheeger \cite{Ch03}, we have that the 2-dimensional Hausdorff measure of the singular set is finite. Moreover, it follows from the
parabolic version of the partial $C^0$-estimate (see \cite{TiZha13} that the limit $M_\infty$ is a normal variety and there is a natural holomorphic map from $M$ onto
$M_\infty$. This actually implies that $M_\infty$ is the canonical model of $M$.
%one can have any tangent cone of the limit space is a metric cone over a Sasaki-Einstein orbifold by Donaldson-Sun \cite{DoSu14}.

The proof of our theorem relies on a uniform $L^4$ bound of Ricci curvature under the K\"ahler-Ricci flow on a smooth minimal model together with the diameter boundedness of the limit singular K\"ahler-Einstein space (in the case of general type) proved by Song \cite{So14-1}. From these we derive a uniform local noncollapsing estimate of K\"ahler-Ricci flow on a minimal model of general type and the Gromov-Hausdorff convergence follows.

In Section 2, we present a short discussion on manifolds with integral bounded Ricci curvature, with emphasis on a uniform local noncollapsing estimate which is essential in extending the regularity theory of Cheeger-Conding and Cheeger-Colding-Tian. Then, in Section 3, we give a proof of our theorem by establishing a uniform $L^4$ Ricci curvature estimate under the K\"ahler-Ricci flow.

After we completed the first draft of this paper, we learned that 
Guo-Song-Weinkove obtained a different proof for the 2-dimensional case of Theorem \ref{Main Thm} (see \cite{guosongwein}).
\section{Manifolds with $L^p$ bounded Ricci curvature}

We recall the relative volume comparison formula relative to the $L^p$ Ricci curvature, $p$ bigger than half dimension, which is due to Petersen-Wei \cite{PeWe97}. It implies a uniform local noncollapsing property for the K\"ahler-Ricci flow on lower dimensional minimal models of general type. We will use this to prove the convergence of K\"ahler-Ricci flow on such manifolds.

Let $(M,g)$ be a complete Riemannian manifold of (real) dimension $m$. For any $\kappa\in\mathbb{R}$ denote by $B^\kappa_r$ a metric ball of radius $r$ in the space form of dimension $m$ with sectional curvature $\kappa$ and by $\vol(B^\kappa_r)$ its volume. Then we have

\begin{theo}[\cite{PeWe97}]
For any $p>\frac{m}{2}$ and $\Lambda<0$, there exists $C=C(m,p,\Lambda)$ such that the followings hold. First of all, for any $x\in M$ and $r>0$,
\begin{equation}
\frac{d}{dr}\bigg(\frac{\vol(B_r(x))}{\vol (B^\Lambda_r)}\bigg)^{\frac{1}{2p}}\le \frac{Cr^{2p}}{\vol(B^\Lambda_r)}\bigg(\int_{B_r(x)}\big|\big(\Ric-(m-1)\Lambda g\big)_-\big|^pdv\bigg)^{\frac{1}{2p}},
\end{equation}
where we define
$$\big(\Ric-(m-1)\Lambda g\big)_-=\max_{|v|=1}\big(0,-\Ric(v,v)+(m-1)\Lambda\big)$$
pointwisely. In particular, for any $r_2>r_1>0$,
\begin{equation}\label{volume comparison: 1}
\bigg(\frac{\vol(B_{r_2}(x))}{\vol (B^\Lambda_{r_2})}\bigg)^{\frac{1}{2p}}
-\bigg(\frac{\vol(B_{r_1}(x))}{\vol (B^\Lambda_{r_1})}\bigg)^{\frac{1}{2p}}
\le C\bigg(\int_{B_{r_2}(x)}\big|\big(\Ric-(m-1)\Lambda g\big)_-\big|^pdv\bigg)^{\frac{1}{2p}}.
\end{equation}
Then, by letting $r_1\rightarrow 0$ it gives, for any $r>0$,
\begin{equation}\label{volume comparison: 2}
\frac{\vol(B_r(x))}{\vol (B^\Lambda_r)}\le 1+C\int_{B_{r}(x)}\big|\big(\Ric-(m-1)\Lambda g\big)_-\big|^pdv.
\end{equation}
\end{theo}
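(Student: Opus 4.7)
The strategy adapts the classical Bishop--Gromov volume comparison, replacing the pointwise Ricci lower bound $\Ric\ge(m-1)\Lambda g$ by the integrated deficit $\int R^p$, where I write $R(r,\theta):=\bigl|(\Ric-(m-1)\Lambda g)_-\bigr|$ along the radial geodesic through direction $\theta$. Fix $x\in M$ and use geodesic polar coordinates, writing the Riemannian volume element as $\mathcal{A}(r,\theta)\,dr\,d\theta$ (extended by zero past the cut locus), with $m(r,\theta):=\partial_r\log\mathcal{A}$ the mean curvature of the geodesic sphere at $r$ in direction $\theta$. The standard Riccati computation along the radial geodesic combined with the trace Cauchy--Schwarz estimate $(\mathrm{tr}\,S)^2\le(m-1)|S|^2$ for the shape operator $S$ yields $m'+m^2/(m-1)\le-\Ric(\partial_r,\partial_r)$. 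Subtracting the corresponding space-form identity $m_\Lambda'+m_\Lambda^2/(m-1)=-(m-1)\Lambda$ and using $m_\Lambda\ge 0$ (valid for $\Lambda\le 0$), the positive part $\psi:=\max(m-m_\Lambda,0)$ satisfies, in the Lipschitz sense,
$$\psi'+\frac{\psi^2}{m-1}\le R.$$

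The next step is to convert this pointwise Riccati bound into integrated control of $\psi$ against $R$. Multiplying the inequality by $2p\psi^{2p-1}\mathcal{A}_\Lambda(r)$ (with $\mathcal{A}_\Lambda$ the space-form area density, $\mathcal{A}_\Lambda(r)\sim r^{m-1}$ near zero) and integrating over the polar wedge of $B_r(x)$, I would discard the favorably-signed nonlinear term and apply H\"older's inequality with exponents adapted to $p>m/2$. The hypothesis $2p>m$ is precisely what makes the weight $\mathcal{A}_\Lambda(s)$ integrable at $s=0$ against the remaining H\"older factor, yielding a bound of the form
$$\bigg(\int_{B_r(x)}\psi^{2p}\,dv_\Lambda\bigg)^{1/(2p)}\le C(m,p,\Lambda)\,r^{\alpha}\bigg(\int_{B_r(x)}R^p\,dv\bigg)^{1/(2p)}$$
for an appropriate exponent $\alpha$, where $dv_\Lambda:=\mathcal{A}_\Lambda\,dr\,d\theta$.

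Finally, to pass from $\psi$-control to the stated volume comparison, set $F(r):=\bigl(\vol(B_r(x))/\vol(B_r^\Lambda)\bigr)^{1/(2p)}$. Starting from the identity $(\log(\mathcal{A}/\mathcal{A}_\Lambda))'=m-m_\Lambda\le\psi$ and applying a further H\"older step in the $\theta$-integration produces the pointwise-in-$r$ bound (2.2); the prefactor $r^{2p}/\vol(B_r^\Lambda)$ arises from the normalization by the $(2p)$-th root together with the scaling $\vol(B_r^\Lambda)\sim r^m$ near zero. Integration in $r$ between $r_1$ and $r_2$ immediately yields (2.3), and sending $r_1\to 0$, using $F(r_1)\to 1$ by local Euclideanity at $x$, gives (2.4).

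The crux of the argument is the H\"older step in the second paragraph: the weight $\mathcal{A}_\Lambda$ and the exponents must be chosen, keyed exactly to the integrability assumption $p>m/2$, so that after taking the $(2p)$-th root the polar-wedge estimate aggregates into a full-ball $L^p$ bound on $R$ with precisely the prefactor stated. The rest of the proof is essentially bookkeeping of constants; the favorably-signed nonlinear term $\psi^2/(m-1)$, which is discarded in Step 2, could be retained to sharpen constants but plays no role in the qualitative statement.
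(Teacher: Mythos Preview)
The paper does not supply its own proof of this statement: Theorem~2.1 is quoted from Petersen--Wei \cite{PeWe97} and used as a black box, with only the subsequent Corollary~2.2 proved in the text. So there is no in-paper argument to compare against.

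That said, your outline is the Petersen--Wei argument in essence, and the structure is sound: Riccati comparison for the positive part $\psi=(m-m_\Lambda)_+$, an ODE/integral inequality step weighted by $\mathcal{A}_\Lambda$, a H\"older step keyed to $2p>m$, and then integration of $(\log(\mathcal{A}/\mathcal{A}_\Lambda))'\le\psi$ to reach the volume ratio. Two remarks on the details. First, in deriving $\psi'+\psi^2/(m-1)\le R$ you dropped the cross term $\tfrac{2m_\Lambda}{m-1}\psi$; this is harmless here because $m_\Lambda\ge0$ for $\Lambda\le0$, as you noted, but in Petersen--Wei that linear term is actually kept and used as an integrating factor (it is what produces the weight $\mathcal{A}_\Lambda$ naturally after multiplying by $\psi^{2p-1}$). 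Second, your description of the H\"older step is the one place that needs care: one should first bound $\int_0^r\psi^{2p}\mathcal{A}_\Lambda$ by an ODE/Gronwall argument in the radial variable (this is where the nonlinear term $\psi^{2p+1}$ is exploited rather than simply discarded), and only then apply H\"older in the angular variable; doing H\"older prematurely loses the ODE structure. With those two points tightened, the proof goes through and yields the constants stated.
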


The following corollary gives a kind of uniform local noncollapsing property on manifolds with integral Ricci curvature bound; see \cite{PeWe01} and \cite{DaWe} for similar volume doubling estimates.

\begin{coro}
For any $\Lambda<0$ and $p>\frac{m}{2}$, there exists $\varepsilon=\varepsilon(m,p,\Lambda)>0$ such that the following holds. Fix a base point $x_0\in M$. For any $x\in M$ with $\dist(x_0,x)=d$, if
\begin{equation}\label{Ricci condition: 1}
\frac{1}{\vol(B_1(x_0))}\int_{B_{2d+1}(x_0)}\big|\big(\Ric-(m-1)\Lambda g\big)_-\big|^pdv\le\frac{\varepsilon}{\vol(B^\Lambda_{d+1})},
\end{equation}
then,
\begin{equation}\label{volume comparison: 3}
\frac{\vol(B_r(x))}{r^m}\ge\frac{\vol(B_1(x_0))}{2\vol(B^\Lambda_{d+1})},\,\forall r\le 1.
\end{equation}
\end{coro}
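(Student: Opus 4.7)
The plan is to apply the Petersen--Wei relative volume comparison \eqref{volume comparison: 1} at the point $x$ between radii $r\le 1$ and $d+1$, then use two triangle-inequality ball inclusions to transfer the resulting bound back to quantities centered at $x_0$. Concretely, I would first write down
\begin{equation*}
\left(\frac{\vol(B_{d+1}(x))}{\vol(B^\Lambda_{d+1})}\right)^{\frac{1}{2p}} - \left(\frac{\vol(B_r(x))}{\vol(B^\Lambda_r)}\right)^{\frac{1}{2p}} \le C\left(\int_{B_{d+1}(x)}\big|\big(\Ric - (m-1)\Lambda g\big)_-\big|^p\,dv\right)^{\frac{1}{2p}}.
\end{equation*}

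Since $\dist(x_0,x)=d$, the triangle inequality gives the nested inclusions $B_1(x_0) \subset B_{d+1}(x) \subset B_{2d+1}(x_0)$. The first yields the lower bound $\vol(B_{d+1}(x))\ge \vol(B_1(x_0))$, while the second combined with the hypothesis \eqref{Ricci condition: 1} bounds the Ricci integral on the right above by $\varepsilon\,\vol(B_1(x_0))/\vol(B^\Lambda_{d+1})$. Substituting both estimates into the displayed inequality and solving for $\vol(B_r(x))/\vol(B^\Lambda_r)$ produces
\begin{equation*}
\left(\frac{\vol(B_r(x))}{\vol(B^\Lambda_r)}\right)^{\frac{1}{2p}} \ge \bigl(1 - C\varepsilon^{\frac{1}{2p}}\bigr)\left(\frac{\vol(B_1(x_0))}{\vol(B^\Lambda_{d+1})}\right)^{\frac{1}{2p}}.
\end{equation*}

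To finish, I would raise this to the $2p$-th power, divide by $r^m$, and use the standard fact that in a space form of constant curvature $\Lambda<0$ the ratio $\vol(B^\Lambda_r)/r^m$ is continuous on $(0,1]$ and tends to a positive Euclidean constant as $r\to 0$, so that it admits a uniform positive lower bound $c(m,\Lambda)$ for $r\le 1$. Fixing $\varepsilon=\varepsilon(m,p,\Lambda)$ small enough that $(1-C\varepsilon^{1/(2p)})^{2p}\cdot c(m,\Lambda)\ge 1/2$ then delivers \eqref{volume comparison: 3}. The whole argument is essentially routine bookkeeping once the Petersen--Wei inequality is in hand; the only point requiring care is choosing $\varepsilon$ to absorb simultaneously the error factor coming from the volume comparison and the space-form constant, so I do not anticipate any serious obstacle.
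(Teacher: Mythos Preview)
Your argument is correct and is essentially identical to the paper's: both apply the Petersen--Wei inequality \eqref{volume comparison: 1} at $x$ with radii $r_1=r$ and $r_2=d+1$, invoke the inclusions $B_1(x_0)\subset B_{d+1}(x)\subset B_{2d+1}(x_0)$, and then choose $\varepsilon$ small. You are in fact slightly more careful than the paper about the last step, spelling out why $\vol(B^\Lambda_r)\ge c(m,\Lambda)\,r^m$ for $r\le 1$, whereas the paper simply asserts that the resulting inequality ``is exactly'' \eqref{volume comparison: 3}.
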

\begin{proof}
By (\ref{volume comparison: 1}), for any $r\le 1$,
\begin{align}
\bigg(\frac{\vol(B_{r}(x))}{\vol (B^\Lambda_{r})}\bigg)^{\frac{1}{2p}}
&\ge\bigg(\frac{\vol(B_{d+1}(x))}{\vol (B^\Lambda_{d+1})}\bigg)^{\frac{1}{2p}}
 -C\bigg(\int_{B_{d+1}(x)}\big|\big(\Ric-(m-1)\Lambda g\big)_-\big|^pdv\bigg)^{\frac{1}{2p}}\nonumber\\
&\ge\bigg(\frac{\vol(B_{1}(x_0))}{\vol (B^\Lambda_{d+1})}\bigg)^{\frac{1}{2p}}
 -C\bigg(\int_{B_{2d+1}(x_0)}\big|\big(\Ric-(m-1)\Lambda g\big)_-\big|^pdv\bigg)^{\frac{1}{2p}}\nonumber.
\end{align}
where $C=C(m,p,\Lambda)$. If (\ref{Ricci condition: 1}) holds for some $\varepsilon=\varepsilon(m,p,\Lambda)$ sufficiently small, then
$$\bigg(\frac{\vol(B_{r}(x))}{\vol (B^\Lambda_{r})}\bigg)^{\frac{1}{2p}}\ge\frac{1}{2^{2p}}\bigg(\frac{\vol(B_{1}(x_0))}{\vol (B^\Lambda_{d+1})}\bigg)^{\frac{1}{2p}},$$
which is exactly the estimate (\ref{volume comparison: 3}).
\end{proof}

The lemma suggests a condition for Gromov-Hausdorff convergence. Suppose we have a sequence of complete Riemannian manifolds $(M_i,g_i)$ of dimension $m$ such that
\begin{equation}\label{Ricci condition: 2}
\int_M\big|\big(\Ric_{g_i}-(m-1)\Lambda g_i\big)_-\big|^pdv_{g_i}\rightarrow 0
\end{equation}
for some $p>\frac{m}{2}$, $\Lambda>0$, and
\begin{equation}\label{volume comparison: 4}
\vol_{g_i}(B_1(x_i))\ge v
\end{equation}
uniformly for some $v>0$ and $x_i\in M_i$, then along a subsequence, the manifolds $(M_i,g_i)$ are uniformly locally noncollapsing on $B_r(x_i)$ for any fixed $r>0$. Thus, we can apply Gromov precompactness theorem to get a noncollapsing limit of $(M_i,g_i,x_i)$ in the Gromov-Hausdorff topology. Furthermore, as showed in \cite{TiZha13}, one can extend the regularity theory of Colding \cite{Co97}, Cheeger-Colding \cite{ChCo96, ChCo97}, Cheeger-Colding-Tian \cite{ChCoTi} and Colding-Naber \cite{CoNa12} in this setting. If in addition we replace (\ref{Ricci condition: 2}) by
\begin{equation}\label{Ricci condition: 3}
\int_M\big|\Ric_{g_i}-(m-1)\Lambda g_i\big|^pdv_{g_i}\rightarrow 0,
\end{equation}
then Anderson's harmonic radius estimate \cite{An90} can also be applied. In summation, we can follow the arguments in \cite{PeWe01} and \cite{TiZha13} to prove

\begin{theo}\label{ChCo}
Let $(M_i,g_i)$ be a sequence of Riemannian manifolds satisfying {\rm(\ref{volume comparison: 4})} and {\rm(\ref{Ricci condition: 3})} for some $p>\frac{m}{2}$ and $\Lambda,v>0$. Then, passing to a subsequence if necessary, $(M_i,g_i,x_i)$ converges in the Cheeger-Gromov sense to a limit length space $(M_\infty,d_\infty,x_\infty)$ such that
\begin{itemize}
  \item[{\rm(1)}] for any $r>0$ and $y_i\in M_i$ with $y_i\rightarrow y_\infty\in M_\infty$ we have
  \begin{equation}
  \vol(B_r(y_i)\rightarrow\mathcal{H}^m(B_r(y_\infty)),
  \end{equation}
  where $\mathcal{H}^m$ denotes the $m$-dimensional Hausdorff measure;
  \item[{\rm(2)}] $M_\infty=\mathcal{R}\cup\mathcal{S}$ such that $\mathcal{S}$ is a closed set of codimension $\ge 2$ and $\mathcal{R}$ is convex in $M_\infty$; $\mathcal{R}$ consists of points whose tangent cones are $\mathbb{R}^m$;
  \item[{\rm(3)}] the convergence on $\mathcal{R}$ takes place in the $C^{\alpha}$ topology for any $0<\alpha<2-\frac{m}{p}$;
  \item[{\rm(4)}] the tangent of any $y\in M_\infty$ is a metric cone which splits off lines isometrically; the tangent cone has the same properties presented in {\rm(2)} and {\rm(3)};
  \item[{\rm(5)}] the singular set of $\mathcal{S}$ has codimension $\ge 4$ if each $M_i$ is K\"ahlerian.
\end{itemize}
\end{theo}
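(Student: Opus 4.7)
The plan is to deduce the theorem by combining the preceding corollary on uniform local noncollapsing with the regularity theory of Cheeger-Colding and Cheeger-Colding-Tian, adapted to the integral Ricci setting as in Petersen-Wei and TiZha13. First, I would fix a radius $R>0$ and observe that (\ref{Ricci condition: 3}) implies, for all $i$ sufficiently large and every $x\in B_R(x_i)$, the hypothesis (\ref{Ricci condition: 1}) with $d\le R$; combined with (\ref{volume comparison: 4}) the corollary produces a uniform lower bound $\vol(B_r(x))\ge c(R,v,\Lambda,m,p)\,r^m$ for all $r\le 1$, while (\ref{volume comparison: 2}) simultaneously gives a uniform upper bound on the volume ratios. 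Gromov precompactness then yields a subsequential pointed Cheeger-Gromov limit $(M_\infty,d_\infty,x_\infty)$, and passing to the limit in (\ref{volume comparison: 2}) under the vanishing of the $L^p$-deficit proves the volume convergence (1).

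For the structural conclusions (2) and (4), the key point is that all the Cheeger-Colding building blocks, the segment inequality, the Abresch-Gromoll excess estimate, the Poincar\'e inequality, and the almost-splitting theorem, extend to the integral Ricci setting with error terms controlled by small $L^p$-norms of the Ricci deviation, as worked out in Petersen-Wei and TiZha13. These extensions yield the regular/singular decomposition with $\cS$ of codimension $\ge 2$ via the standard Cheeger-Colding dimension-counting through iterated splittings, give the convexity of $\cR$ via the H\"older continuity of tangent cones of Colding-Naber, and produce the metric cone structure of tangent cones with the same stratification. The regularity in (3) invokes Anderson's harmonic radius estimate in its integral $L^p$-Ricci form, which is precisely why the full $L^p$ bound (\ref{Ricci condition: 3}) is imposed rather than only (\ref{Ricci condition: 2}); a uniform harmonic radius lower bound on $\cR$ together with elliptic theory in harmonic charts then upgrades Cheeger-Gromov convergence to $C^\alpha$ convergence for every $\alpha<2-m/p$. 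Finally, the K\"ahler refinement (5) is the Cheeger-Colding-Tian codimension four theorem: in the K\"ahler case, any tangent cone splitting off a line must, by compatibility with the limit almost complex structure, already split off $\mathbb{C}$, upgrading codimension $2$ to codimension $4$.

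The chief technical obstacle is verifying that the Cheeger-Colding machinery really does go through under only an integral Ricci bound in place of a pointwise lower bound; however, this has been carried out in the cited works, so the role of the present statement is essentially to assemble them into a single package. The only input specific to the K\"ahler-Ricci flow application of Section 3 will be the smallness of the $L^p$-Ricci deficit along a suitable sequence of times, which is furnished by the $L^4$ estimate established there.
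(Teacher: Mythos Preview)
Your proposal is correct and matches the paper's approach. The paper itself does not give a detailed proof of this theorem; it simply states that one can follow the arguments in \cite{PeWe01} and \cite{TiZha13}, having already noted in the preceding paragraph that the regularity theory of Colding, Cheeger--Colding, Cheeger--Colding--Tian and Colding--Naber extends to the integral Ricci setting and that Anderson's harmonic radius estimate applies under the stronger hypothesis (\ref{Ricci condition: 3}). Your outline fleshes out precisely these ingredients---noncollapsing from the corollary plus Gromov precompactness, the $L^p$ extensions of the Cheeger--Colding toolbox, Anderson's estimate for the $C^\alpha$ convergence, and the Cheeger--Colding--Tian codimension-four argument in the K\"ahler case---so it is an accurate expansion of what the paper leaves implicit.
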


\section{K\"ahler-Ricci flow on minimal models}

\subsection{$L^4$ bound of Ricci curvature under K\"ahler-Ricci flow on minimal models}

Let $M$ be a smooth minimal model. If the Kodaira dimension equals 0, the manifold is Calabi-Yau and any K\"ahler-Ricci flow on $M$ converges smoothly to a Ricci flat metric \cite{Cao85}. We assume from now on that the Kodaira dimension of $M$ is positive. Then we consider the normalized K\"ahler-Ricci flow
\begin{equation}\label{KRF}
\frac{\partial}{\partial t}\omega(t)=-\Ric_{\omega(t)}-\omega(t),\,\omega(0)=\omega_0.
\end{equation}
It is well-known that the solution exists for all time $t\in[0,\infty)$ \cite{TiZh06}.

We shall prove the following theorem.

\begin{theo}
Suppose $M$ has positive Kodaira dimension and $K_M$ is semi-ample. Then there is a constant $C$ depending on $\omega_0$ such that
\begin{equation}\label{L4 estimate: 1}
\int_t^{t+1}\int_M|\Ric_{\omega(s)}|^4\omega(s)^nds\le C,\,\forall t\in[0,\infty).
\end{equation}
Moreover, for any $0<p<4$ we have
\begin{equation}\label{L4 estimate: 11}
\int_t^{t+1}\int_M|\Ric_{\omega(s)}+\omega(s)|^p\omega(s)^nds\rightarrow 0,\mbox{ as }t\rightarrow\infty.
\end{equation}
\end{theo}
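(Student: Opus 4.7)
The plan is to deduce (\ref{L4 estimate: 1}) from a two-step parabolic integration (an $L^2$ bound from the scalar-curvature evolution, then a Bochner bootstrap to $L^4$), and (\ref{L4 estimate: 11}) from a Chern--Weil identity combined with H\"older interpolation. Since $K_M$ is semi-ample, I pick a smooth semi-positive representative $\chi\in c_1(K_M)$ and a smooth volume form $\Omega$ on $M$ with $\sqrt{-1}\partial\bar\partial\log\Omega=\chi$. Writing $\omega(t)=\omega_t+\sqrt{-1}\partial\bar\partial\varphi$ with $\omega_t=\chi+e^{-t}(\omega_0-\chi)$, the normalized flow (\ref{KRF}) becomes the Monge--Amp\`ere equation $\dot\varphi=\log(\omega(t)^n/\Omega)-\varphi$, and differentiation in $t$ yields the central identity
\begin{equation}\label{KeyId1}
\Ric(\omega(t))+\omega(t)=-\sqrt{-1}\partial\bar\partial\dot\varphi+e^{-t}(\omega_0-\chi),
\end{equation}
with trace $R+n=-\Delta\dot\varphi+e^{-t}\tr_{\omega(t)}(\omega_0-\chi)$. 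Throughout, I will use as black boxes the a priori estimates known on minimal models: $|\dot\varphi|\le C$ and $\omega(t)^n\le C\,\Omega$ (Song--Tian/Tian--Zhang), the uniform scalar bound $|R|\le C$ (Zhang), and the lower bound $R+n\ge -Ce^{-t}$.

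For (\ref{L4 estimate: 1}) I first derive an $L^2$ Ricci bound from the normalized scalar evolution $(\partial_t-\Delta)R=|\Ric|^2+R$ and $\partial_t\omega^n=-(R+n)\omega^n$, which together give
\[
\frac{d}{dt}\int_M R\,\omega^n=\int_M|\Ric|^2\omega^n-\int_M R^2\omega^n-(n-1)\int_M R\,\omega^n.
\]
Since $\int_M R\,\omega^n$ is controlled by the Chern-number pairing $c_1(K_M)\cdot[\omega(t)]^{n-1}$ (hence uniformly bounded) and $|R|\le C$, integrating from $t$ to $t+1$ yields $\int_t^{t+1}\int_M|\Ric|^2\omega^n\,ds\le C$. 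To bootstrap to $L^4$ I apply the parabolic Bochner formula for $|\Ric|^2$, multiply by $|\Ric|^2$ and integrate in space-time. The good term $\int|\nabla\Ric|^2$ must absorb the cubic and quartic curvature errors; this is done using the pointwise scalar bound $|R|\le C$, the parallel $L^2$-in-time bound on $|\nabla R|$ obtained from an analogous integration of $(\partial_t-\Delta)R^2$, and Chern--Weil identities expressing the integral of the full Riemann curvature squared in terms of Chern numbers of $M$ and $[\omega(t)]$. Combined with (\ref{KeyId1}) and the pointwise inequality $|\Ric|^4\le 8(|\Ric+\omega|^4+n^2)$, this yields (\ref{L4 estimate: 1}).

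For (\ref{L4 estimate: 11}), the pointwise identity $|\Ric+\omega|^2=|\Ric|^2+2R+n$ gives
\[
\int_M|\Ric+\omega|^2\omega^n=\int_M|\Ric|^2\omega^n+2\int_M R\,\omega^n+n\int_M\omega^n.
\]
Each term has a cohomological limit as $[\omega(t)]\to c_1(K_M)=:K$: one has $\int_M\omega^n\to K^n$ and $\int_M R\,\omega^n\to -nK^n$ by Chern--Weil, while the K\"ahler identity $|\Ric|^2\omega^n=R^2\omega^n-n(n-1)\Ric\wedge\Ric\wedge\omega^{n-2}$ together with $\int\Ric\wedge\Ric\wedge\omega^{n-2}\to K^n$ and the consequence $\int_M R^2\omega^n\to n^2 K^n$ of the bounds $|R|\le C$, $(R+n)_-\le Ce^{-t}$, and $\int(R+n)\omega^n\to 0$ yields $\int_M|\Ric|^2\omega^n\to nK^n$. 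The three limits sum to $(n-2n+n)K^n=0$, consistent with the limit metric being K\"ahler--Einstein with $\Ric=-\omega$. Hence $\int_M|\Ric+\omega|^2\omega^n\to 0$, and H\"older interpolation between this $L^2$ decay and the uniform $L^4$ bound of (\ref{L4 estimate: 1}), together with the continuous inclusion $L^2\subset L^p$ for $p\le 2$ on the fixed-total-volume manifold, gives (\ref{L4 estimate: 11}) for every $0<p<4$.

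The principal obstacle is the $L^2\to L^4$ bootstrap in the second paragraph: without a uniform Sobolev inequality along the degenerating flow, a Moser iteration of the Bochner inequality for $|\Ric|^2$ is not directly available, so the quartic Riemann-curvature cross term must be absorbed by hand using the scalar curvature bound, the previously established $L^2$ Ricci and $L^2$ scalar gradient bounds, and Chern--Weil identities controlling space-time integrals of the full curvature squared. Carrying this absorption out uniformly in $t$ is the technical heart of the argument.
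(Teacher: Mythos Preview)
Your $L^4$ bootstrap contains a genuine gap. If you apply the parabolic Bochner formula to $|\Ric|^2$, the reaction term is of schematic form $Rm\ast\Ric\ast\Ric$, so after multiplying by $|\Ric|^2$ and integrating you are left with a space--time integral of order $\int|Rm|\,|\Ric|^4$. None of the ingredients you list (the pointwise bound $|R|\le C$, the $L^2$-in-time bound on $|\nabla R|$, or the Chern--Weil control of $\int|Rm|^2$) can absorb a quintic curvature term of this kind: Cauchy--Schwarz against $\int|Rm|^2$ only produces $\int|\Ric|^8$, and there is no pointwise Ricci or Riemann bound available along the degenerating flow. Your own last paragraph essentially concedes this; the absorption you describe is not actually carried out, and I do not see how it can be in this tensorial framework.

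The paper's proof avoids this entirely by replacing the tensor estimate with a scalar one. The key input you did not invoke is the Song--Tian estimate $|\nabla u|+|\triangle u|\le C$ for $u=\varphi+\dot\varphi$, together with the identity $\Ric+\chi=-\sqrt{-1}\partial\bar\partial u$. This reduces the $L^4$ Ricci bound to bounding $\int|\partial\bar\partial u|^4$. Two integration-by-parts identities (from \cite{TiZha13}) then give
\[
\int_M|\nabla\bar\nabla u|^4\le C\int_M|\nabla u|^2\big(|\bar\nabla\nabla\nabla u|^2+|\nabla\nabla\bar\nabla u|^2\big),\qquad
\int_M\big(|\bar\nabla\nabla\nabla u|^2+|\nabla\nabla\bar\nabla u|^2\big)\le C\int_M\big(|\nabla\triangle u|^2+|\nabla u|^2|Rm|^2\big).
\]
The first inequality is exactly where the pointwise bound $|\nabla u|\le C$ replaces the missing Sobolev inequality. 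Thus everything reduces to controlling $\int_M|\nabla\triangle u|^2$ (done via the evolution of $(\triangle u)^2$ and the Schwarz lemma for $\tr_\omega\chi$) and $\int_M|Rm|^2$ (done via Chern--Weil, as you anticipated). The point is that the quartic tensor estimate becomes a \emph{third-order} scalar estimate, and the dangerous curvature--curvature interaction disappears.

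Your argument for the $L^2$ decay of $\Ric+\omega$ is different from the paper's but is correct; indeed, it gives the slightly stronger conclusion $\int_M|\Ric+\omega|^2\omega^n\to 0$ for every $t$, whereas the paper deduces only $\int_0^\infty\int_M|\Ric+\omega|^2<\infty$ directly from the scalar-curvature evolution. The H\"older interpolation step to obtain \eqref{L4 estimate: 11} for $2\le p<4$ is the same in both approaches.
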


We start with some general set-up following \cite{SoTi11}. Since $K_M$ is semi-ample, a basis of $H^0(M;K_M^\ell)$ for some large $\ell$ gives rise to a holomorphic map
\begin{equation}
\pi: M\rightarrow\mathbb{C}P^N,\,N=\dim H^0(M;K_M^\ell)-1.
\end{equation}
Let $\omega_{FS}$ be the Fubini-Study metric on $\mathbb{C}P^N$ and put
\begin{equation}
\chi=\frac{1}{\ell}\pi^*\omega_{FS}\in [K_M].
\end{equation}
Choose a smooth volume form $\Omega$ such that $\Ric(\Omega)=-\chi$.
Put
$$\hat{\omega}_t\,=\,e^{-t}\,\omega_0\,+\,(1-e^{-t})\chi.$$
It represents a K\"ahler metric in the class $[\omega(t)]$ and write
\begin{equation}
\omega(t)=\hat{\omega}_t+\sqrt{-1}\partial\bar{\partial}\varphi(t)
\end{equation}
for a family of smooth functions $\varphi(t)$. Then the K\"ahler-Ricci flow (\ref{KRF}) is equivalent to
\begin{equation}\label{MA}
\frac{\partial\varphi}{\partial t}=\log\frac{e^{(n-\kappa)t}(\hat{\omega}_t+\sqrt{-1}\partial\bar{\partial}\varphi)^n}{\Omega}-\varphi.
\end{equation}
%One immediate observation is
%\begin{equation}
%C_1^{-1}e^{-nt}\Omega\le\hat{\omega}_t^n\le C_1e^{-(n-\kappa)t}\Omega
%\end{equation}
%for some $C_1=C_1(\omega_0,\chi)$.

\begin{lemm}[\cite{SoTi11}]
There exists $C_i=C_i(\omega_0,\chi)$, $i=1,2$, such that
\begin{equation}\label{C0 estimate}
\|\varphi(t)\|_{C^0}+\|\varphi'(t)\|_{C^0}\le C_1,\,\forall t\ge 0
\end{equation}
and
\begin{equation}
\chi\le C_2\omega(t),\,\forall t\ge 0.
\end{equation}
\end{lemm}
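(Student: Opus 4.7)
The plan is to prove the three bounds in sequence via maximum-principle arguments adapted to the degenerate parabolic Monge--Amp\`ere equation (\ref{MA}), supplemented by pluripotential input to control $\varphi$ from below. Throughout I write $\Delta = \Delta_{\omega(t)}$ and $\tr_\omega = \tr_{\omega(t)}$ for brevity.

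\textbf{Step 1: uniform $C^0$ bound on $\varphi$.} For the upper bound I track $\varphi_{\max}(t) = \max_M \varphi(\cdot,t)$: at a spatial maximum $\sqrt{-1}\partial\bar\partial\varphi \le 0$, so $(\hat\omega_t + \sqrt{-1}\partial\bar\partial\varphi)^n \le \hat\omega_t^n$, which is uniformly bounded because $\hat\omega_t \le \omega_0 + \chi$. Inserting this into (\ref{MA}) gives an ODE inequality $\varphi_{\max}'(t) \le C - \varphi_{\max}(t)$ and hence a uniform upper bound. The lower bound is subtler because $\chi$ is only semipositive. Since $[K_M] = [\chi]$ is big and nef, standard constructions (following Tsuji and Tian--Zhang) provide a quasi-psh function $\rho$ with $\chi + \sqrt{-1}\partial\bar\partial\rho \ge 0$ and $\int_M e^{-\alpha\rho}\,\omega_0^n < \infty$ for some $\alpha > 0$. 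Comparing $\varphi$ against $\rho$ through the maximum principle and invoking Kolodziej's $L^\infty$ estimate for Monge--Amp\`ere equations with $L^p$ right-hand side yields the uniform lower bound on $\varphi$.

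\textbf{Step 2: uniform $C^0$ bound on $\varphi'$.} Differentiating (\ref{MA}) in $t$ gives
$$\frac{\partial \varphi'}{\partial t} = \Delta \varphi' + e^{-t}\tr_\omega(\chi - \omega_0) + (n - \kappa) - \varphi'.$$
I apply the maximum principle to $Q = \varphi' + A\varphi$ for a large constant $A > 0$. Using the identity $\Delta \varphi = n - \tr_\omega \hat\omega_t$ and the decomposition $\hat\omega_t = e^{-t}\omega_0 + (1 - e^{-t})\chi$, the indefinite term $e^{-t}\tr_\omega(\chi - \omega_0)$ is absorbed by $-A\Delta\varphi$ once $A$ is sufficiently large, producing a two-sided bound on $Q$ at spacetime extrema. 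Combined with the $C^0$ bound on $\varphi$ from Step~1, this gives the desired bound on $\varphi'$.

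\textbf{Step 3: the Schwarz-type estimate $\chi \le C_2 \omega(t)$.} I apply the parabolic Schwarz lemma to $u = \log(\tr_\omega \chi + \epsilon) - A\varphi$, regularized by $\epsilon > 0$ to accommodate the degeneracy of $\chi$ along the exceptional set of $\pi$. Using that $\chi = \ell^{-1}\pi^*\omega_{FS}$ is the pullback of a smooth metric with bounded bisectional curvature on $\mathbb{C}P^N$, a Yau--Royden type computation yields
$$(\partial_t - \Delta) \log(\tr_\omega \chi + \epsilon) \le C_0 \tr_\omega \chi$$
with $C_0$ independent of $\epsilon$. Combined with $(\partial_t - \Delta)(-A\varphi) = -A\varphi' + A\Delta\varphi$ and the decomposition of $\hat\omega_t$, one arrives at
$$(\partial_t - \Delta) u \le \bigl(C_0 - A(1-e^{-t})\bigr)\tr_\omega \chi - A\varphi' + An,$$
modulo manifestly nonpositive terms. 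Choosing $A$ large and restricting to $t \ge t_0 > 0$ makes the coefficient of $\tr_\omega \chi$ strictly negative, so at a space-time maximum the $C^0$ bound on $\varphi'$ from Step~2 forces $\tr_\omega \chi$ to be bounded. Short-time smoothness handles the interval $[0, t_0]$, and letting $\epsilon \to 0^+$ then produces the desired estimate $\chi \le C_2 \omega(t)$ for all $t \ge 0$.

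\textbf{Main obstacle.} The delicate point is Step~3: the classical Schwarz lemma assumes a K\"ahler target, whereas here $\chi$ genuinely degenerates along the exceptional locus of $\pi$. The $\epsilon$-regularization above circumvents this, but it is crucial that the bisectional-curvature constant $C_0$ survives the passage $\epsilon \to 0$; this works precisely because $C_0$ is controlled by $\omega_{FS}$ on the ambient $\mathbb{C}P^N$ rather than by $\chi$ on $M$. A secondary challenge is the lower bound on $\varphi$ in Step~1, where the big-but-not-ample nature of $K_M$ forces one to leave the classical K\"ahler setting and import pluripotential tools.
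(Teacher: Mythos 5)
First, note that the paper does not actually prove this lemma: it is quoted from \cite{SoTi11} (with the general-type case attributed to \cite{Zh09}), so your proposal is being measured against the arguments in those references rather than against anything in the present text. Against that standard there are two genuine gaps. The first is one of generality. The lemma is invoked under the hypotheses of Theorem 3.1, namely $K_M$ semi-ample with Kodaira dimension $\kappa$ satisfying $0<\kappa\le n$, which includes the collapsing case $\kappa<n$; this is exactly why the factor $e^{(n-\kappa)t}$ sits in (\ref{MA}). Your Step 1 ignores both points: the upper bound uses only $\hat\omega_t\le\omega_0+\chi$, which gives $e^{(n-\kappa)t}\hat\omega_t^n/\Omega\le Ce^{(n-\kappa)t}$ and blows up when $\kappa<n$ (one needs $\hat\omega_t^n\le Ce^{-(n-\kappa)t}\Omega$, coming from $\chi^j\wedge\omega_0^{n-j}=0$ for $j>\kappa$ since $\chi$ is pulled back from a $\kappa$-dimensional image); and the lower bound explicitly assumes $[K_M]$ is big, i.e.\ general type. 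In the collapsing case $\chi^n\equiv 0$, there is no big class to feed into the Tsuji/Koll\kern0pt odziej machinery, and the uniform lower bound on $\varphi$ is one of the substantive points of \cite{SoTi11}, proved there using the fibration structure (restriction to Calabi--Yau fibers), not by the argument you sketch.

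The second gap is Step 2, which does not work as written. Using $\Delta\varphi=n-\tr_\omega\hat\omega_t$ one computes, for $Q=\varphi'+A\varphi$,
\begin{equation*}
(\partial_t-\Delta)Q=(A-1)\varphi'+(A-1)e^{-t}\tr_\omega\omega_0+\bigl(e^{-t}+A(1-e^{-t})\bigr)\tr_\omega\chi+(n-\kappa)-An,
\end{equation*}
in which, for $A\ge1$, every trace term is \emph{nonnegative}. So the claimed absorption goes the wrong way: $-A\Delta\varphi$ adds positive multiples of $\tr_\omega\omega_0$ and $\tr_\omega\chi$, which bounds $(\partial_t-\Delta)Q$ from below but not from above. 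At a spacetime maximum the inequality $(\partial_t-\Delta)Q\ge0$ is then vacuous, and at a spacetime minimum the inequality $(\partial_t-\Delta)Q\le0$ yields only $\varphi'\le An/(A-1)$ at the minimum point, which does not bound $Q$ from below; neither direction closes. The coefficient of $\varphi'$ and the coefficient of $\tr_\omega\omega_0$ are both proportional to $(A-1)$, so no choice of sign or size of $A$ in this family gives the lower bound on $\varphi'$. The upper bound on $\varphi'$ can be rescued with the opposite sign, $\varphi'-A\varphi$ (or the classical $(e^t-1)\varphi'-\varphi-nt$), but the uniform lower bound $\varphi'\ge-C$ is genuinely the delicate part of \cite{SoTi11}/\cite{Zh09} and is not a two-line maximum principle; in particular your linear ordering (bound $\varphi'$ before the Schwarz lemma) does not match how the estimates actually interlock. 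Step 3, by contrast, is essentially the standard parabolic Schwarz lemma of Song--Tian and is fine, except that as you set it up it consumes the unproven two-sided bound on $\varphi'$ from Step 2.
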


Let $u=\varphi+\varphi'$ for any time $t$.

\begin{lemm}[\cite{SoTi11}]
There exists $C_3=C_3(\omega_0,\chi)$ such that
\begin{equation}
\|\nabla u(t)\|_{C^0}+\|\triangle u(t)\|_{C^0}\le C_3,\,\forall t\ge 0.
\end{equation}
\end{lemm}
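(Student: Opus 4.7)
The plan is to establish both bounds via maximum-principle arguments on appropriate test functions, anchored by a single key identity. Differentiating the Monge-Amp\`ere equation $(\ref{MA})$ once in $t$, applying $\sqrt{-1}\partial\bar\partial$, and using $\Ric(\Omega) = -\chi$, one obtains
$$\sqrt{-1}\partial\bar\partial u = -\Ric_{\omega(t)} - \chi,$$
whose trace with respect to $\omega(t)$ reads $\triangle u = -R - \tr_{\omega(t)}\chi$. A direct time differentiation yields $\partial_t u = -\kappa - R$, hence
$$(\partial_t - \triangle) u = \tr_{\omega(t)}\chi - \kappa,$$
which by the previous lemma lies in a fixed bounded interval. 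Together with $\|u\|_{C^0} \le 2C_1$, this equation is the foundation for both bounds.

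For the gradient estimate I would apply the K\"ahler Bochner formula to $|\nabla u|^2$ and rewrite the Ricci term via the key identity as a bounded multiple of $|\nabla u|^2$ plus $2(\sqrt{-1}\partial\bar\partial u)(\nabla u, \bar\nabla u)$; Cauchy-Schwarz absorbs the latter into the good $-|\nabla\bar\nabla u|^2$ term at the cost of a quartic $C|\nabla u|^4$. I would then run the maximum principle on $H = e^{-Au}|\nabla u|^2$ for large $A$: the $C^0$ bound on $u$ makes the exponential factor two-sided bounded, while $A$ large produces, through the term $-A|\nabla u|^2(\partial_t-\triangle)u$, a negative $|\nabla u|^2$-contribution that dominates the quartic term at a maximum, yielding $\|\nabla u\|_{C^0}\le C$.

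For the Laplacian bound I would run a parabolic Aubin-Yau $C^2$ estimate. Setting $H_2 = \log\tr_{\hat\omega_t}\omega - A\varphi$, Yau's standard computation (the only curvature input being the uniformly bounded bisectional curvature of the smooth reference family $\hat\omega_t$) combined with $(\ref{MA})$ and the $C^0$ bounds on $\varphi,\varphi'$ produces, for $A$ sufficiently large, a uniform upper bound on $\tr_{\hat\omega_t}\omega$. Combined with $\chi\le C_2\omega$, this yields a $C^0$ bound on $\triangle\varphi$; differentiating the Monge-Amp\`ere equation once in $t$ and using the evolution of $\varphi'$ then produces a bound on $\triangle\varphi'$, so $\triangle u = \triangle\varphi + \triangle\varphi'$ is uniformly controlled.

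The main obstacle is that $\Ric_{\omega(t)}$ is not a priori bounded: both the Bochner and the Aubin-Yau arguments must be closed up using the identity $\Ric = -\chi - \sqrt{-1}\partial\bar\partial u$, converting unbounded curvature into Hessian of $u$ which is then absorbed by the negative Bochner terms or controlled by the $C^0$ data. A secondary subtlety, in the semi-ample (non-ample) case, is that $\hat\omega_t\to\chi$ may degenerate transversally, so the pointwise bound $\chi\le C_2\omega$ from the previous lemma is essential for keeping both estimates uniform in $t$.
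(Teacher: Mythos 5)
Your preliminary identities are correct ($\sqrt{-1}\partial\bar\partial u=-\Ric-\chi$, $\triangle u=-R-\tr_\omega\chi$, $(\partial_t-\triangle)u=\tr_\omega\chi-\kappa$, the last bounded thanks to $\chi\le C_2\omega$), and note that the paper itself gives no proof of this lemma: it cites \cite{SoTi11} (and \cite{Zh09} for general type), whose arguments are indeed maximum principles on test functions built from $u$ --- but your specific mechanisms do not close. For the gradient bound, the argument must be parabolic (at fixed $t$ there is no usable elliptic equation for $u$, since $\triangle u=-R-\tr_\omega\chi$ with $R$ uncontrolled), and in the parabolic computation the Bochner--Ricci term cancels against the contribution of $\partial_t g^{i\bar j}=R^{i\bar j}+g^{i\bar j}$: one gets $(\partial_t-\triangle)|\nabla u|^2=|\nabla u|^2-|\nabla\nabla u|^2-|\nabla\bar\nabla u|^2+2\re\langle\nabla(\tr_\omega\chi),\bar\nabla u\rangle$. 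So there is no Ricci term to substitute for; the genuinely hard term is $2\re\langle\nabla(\tr_\omega\chi),\bar\nabla u\rangle$, which involves derivatives of the unknown metric and which your sketch never addresses (in \cite{SoTi11} it is absorbed by coupling the test function with $\tr_\omega\chi$ and using the good term $-|\nabla\tr_\omega\chi|^2/\tr_\omega\chi$ from the parabolic Schwarz lemma). Moreover the weight $e^{-Au}$ does not produce the good term you claim: at a maximum of $H=e^{-Au}|\nabla u|^2$ the critical-point identity $\nabla|\nabla u|^2=A|\nabla u|^2\nabla u$ turns the cross term into $+2A^2e^{-Au}|\nabla u|^4$, which overwhelms the $-A^2e^{-Au}|\nabla u|^4$ coming from $\triangle e^{-Au}$; and the term you actually invoke, $-A|\nabla u|^2(\partial_t-\triangle)u=-A(\tr_\omega\chi-\kappa)|\nabla u|^2$, is only quadratic in $|\nabla u|$ and of indefinite sign, so it cannot dominate a quartic $C|\nabla u|^4$. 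The classical fix is the quotient $|\nabla u|^2/(2A-u)$, for which the first-order terms cancel at the critical point and the good quartic comes from the Hessian terms via $|\nabla|\nabla u|^2|=|\nabla u|^3/(2A-u)$.

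The Laplacian step has a more basic problem: the parabolic Aubin--Yau estimate for $\log\tr_{\hat\omega_t}\omega-A\varphi$ needs a uniform (lower) bound on the bisectional curvature of the reference metrics, but $\hat\omega_t=e^{-t}\omega_0+(1-e^{-t})\chi$ degenerates along the exceptional locus $E$ as $t\to\infty$, so this curvature input is not uniformly bounded; in addition the volume ratio $\omega^n/\hat\omega_t^n$ blows up near $E$, which is why the known second-order estimates of this type (Tsuji, Tian--Zhang) hold only on compact subsets of $M\setminus E$, while globally one has only the Schwarz-lemma direction $\chi\le C_2\omega$. A uniform global bound $\tr_{\hat\omega_t}\omega\le C$ is therefore not available by this route. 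Finally, since $\triangle u=-R-\tr_\omega\chi$ and $\tr_\omega\chi\le C$, bounding $\triangle u$ is equivalent to the uniform scalar curvature bound, which is the main theorem of \cite{SoTi11} and \cite{Zh09}; it cannot be recovered by ``differentiating the Monge--Amp\`ere equation in $t$'' because $\ddot\varphi$ is not under control. In the cited proofs the Laplacian bound is obtained by a second maximum principle applied to a quantity of the form $(-\triangle u+\Lambda|\nabla u|^2)/(2A-u)$, using the already established gradient bound together with $|\nabla\bar\nabla u|^2\ge(\triangle u)^2/n$.
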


When the manifold is of general type, these estimates are proved in \cite{Zh09}. We also observe that, from (\ref{MA}),
\begin{equation}
\Ric_{\omega(t)}+\sqrt{-1}\partial\bar{\partial}u(t)=-\chi.
\end{equation}
So, by the uniform bound of $\chi$ in terms of $\omega(t)$, to prove the $L^4$ bound of Ricci curvature, it suffices to show
\begin{equation}\label{L4 estimate: 2}
\int_t^{t+1}\int_M|\partial\bar{\partial}u(s)|^4\omega(s)^nds\le C_4,\,\forall t\ge 0
\end{equation}
for some $C_4$ independent of $t$. We follow the line in \cite{TiZha13} to prove this estimate.

As in \cite{TiZha13} we let $\nabla$ and $\bar{\nabla}$ denote the (1,0) and (0,1) part of the Levi-Civita connection of $\omega(t)$. Then, by the calculations in \cite{TiZha13}, we have the following lemmas

\begin{lemm}
There exists $C_5=C_5(\omega_0,\chi)$ such that
\begin{equation}
\int_M\big(|\nabla\nabla u|^2+|\nabla\bar{\nabla}u|^2+|\Ric|^2+|Rm|^2\big)\omega^n\le C_5,\,\forall t\ge 0.
\end{equation}
\end{lemm}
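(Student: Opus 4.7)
The plan is to bound each of the four quadratic quantities separately. The central observation is that the Kähler class $[\omega(t)] = e^{-t}[\omega_0] + (1-e^{-t})[\chi]$ remains in a compact subset of $H^{1,1}(M;\RR)$, so any fixed topological pairing against $[\omega(t)]^{n-2}$ is bounded uniformly in $t$. Two preliminary remarks: taking the trace of the identity $\Ric_{\omega(t)} + \sqrt{-1}\partial\bar\partial u = -\chi$ with respect to $\omega(t)$ gives $R = -\tr_{\omega(t)}\chi - \Delta u$, which is uniformly bounded by the previous lemmas together with $\chi \le C_2\,\omega(t)$; and $\vol(M,\omega(t)) = [\omega(t)]^n$ is likewise bounded.

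To control $\int_M|\Ric|^2\,\omega^n$, I would use the pointwise Kähler identity
\[
n(n-1)\,\alpha\wedge\alpha\wedge\omega^{n-2} = \bigl((\tr_\omega\alpha)^2 - |\alpha|_\omega^2\bigr)\,\omega^n
\]
with $\alpha = \rho_{\omega(t)}$, which yields
\[
\int_M |\Ric|^2\,\omega^n = \int_M R^2\,\omega^n - n(n-1)\int_M \rho\wedge\rho\wedge\omega(t)^{n-2}.
\]
The first term is controlled by $\|R\|_{C^0}^2\,\vol(M,\omega(t))$, and the second equals the cohomological pairing $(2\pi)^2\,c_1(M)^2\cdot[\omega(t)]^{n-2}$, hence is uniformly bounded. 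The corresponding estimate on $\int_M|\nabla\bar\nabla u|^2\,\omega^n = \int_M|\sqrt{-1}\partial\bar\partial u|^2\,\omega^n$ then follows at once from $|\sqrt{-1}\partial\bar\partial u|^2 \le 2|\Ric|^2 + 2|\chi|^2$ and the pointwise bound $|\chi|_\omega^2 \le nC_2^2$.

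For $\int_M|\nabla\nabla u|^2\,\omega^n$, I would invoke the Riemannian Bochner identity
\[
\int_M |\nabla^2 u|^2\,\omega^n = \int_M (\Delta u)^2\,\omega^n - \int_M \Ric(\nabla u,\nabla u)\,\omega^n,
\]
combined with the Kähler decomposition $|\nabla^2 u|^2 = 2|\nabla\nabla u|^2 + 2|\nabla\bar\nabla u|^2$. The first term on the right is bounded by $\|\Delta u\|_{C^0}^2\,\vol(M,\omega(t))$, and the second by Cauchy–Schwarz from the $C^0$ bound on $\nabla u$ and the $L^2$ Ricci bound just obtained.

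Finally, for $\int_M|Rm|^2\,\omega^n$ I would appeal to a Chern–Weil identity for the second Chern form computed from the Chern connection on $T_M$. On a Kähler $n$-fold, pairing a representative of $c_2(M)$ with $\omega^{n-2}$ produces a pointwise relation of the schematic shape
\[
8\pi^2\,c_2(M)\wedge\frac{\omega^{n-2}}{(n-2)!} = \bigl(a_n|Rm|^2 + b_n|\Ric|^2 + c_n R^2\bigr)\frac{\omega^n}{n!}
\]
with universal constants $a_n\ne 0$, $b_n$, $c_n$. Since the cohomological pairing $c_2(M)\cdot[\omega(t)]^{n-2}$ is uniformly bounded in $t$, and $\int_M|\Ric|^2\,\omega^n$ and $\int_M R^2\,\omega^n$ have already been controlled, the desired bound follows. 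This final step is the principal obstacle: one has to track the Chern–Weil computation carefully enough to verify that the coefficient of $|Rm|^2$ in the pointwise identity is nonzero and to identify the remaining lower-order coefficients explicitly, which is classical but delicate in higher dimension.
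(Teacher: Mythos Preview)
Your argument is correct and is the standard route to these $L^2$ curvature bounds. The paper itself does not prove this lemma; it simply quotes the result from \cite{TiZha13}, so there is no in-paper proof to compare against line by line. That said, the approach you outline---bounding $\int_M|\Ric|^2\,\omega^n$ via the cohomological pairing $c_1(M)^2\cdot[\omega(t)]^{n-2}$ and the scalar curvature bound, then deducing $\int_M|\nabla\bar\nabla u|^2$ from $\Ric+\sqrt{-1}\partial\bar\partial u=-\chi$, bounding $\int_M|\nabla\nabla u|^2$ by the integrated Bochner formula, and finally extracting $\int_M|Rm|^2\,\omega^n$ from the Chern--Weil representation of $c_2(M)\cdot[\omega(t)]^{n-2}$---is exactly the classical method and is what is carried out in the reference. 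Your only caveat, about verifying that the coefficient of $|Rm|^2$ in the pointwise $c_2$ identity is nonzero, is not a genuine obstacle: this is the well-known Apte-type formula (see e.g.\ \cite{Besse}), and the coefficient is manifestly nonzero in every complex dimension. One small point of care: in the Bochner step the Laplacian is the Riemannian one, which differs from the paper's complex $\triangle$ by a factor of $2$, but this is harmless since both are uniformly bounded.
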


\begin{lemm}[\cite{TiZha13}]
There exists a universal constant $C_6=C_6(n)$ such that
\begin{equation}
\int_M|\nabla\bar{\nabla}u|^4\le C_6\int_M|\nabla u|^2\big(|\bar{\nabla}\nabla\nabla u|^2+|\nabla\nabla\bar{\nabla}u|^2\big);
\end{equation}
\begin{equation}
\int_M\big(|\bar{\nabla}\nabla\nabla u|^2+|\nabla\nabla\bar{\nabla}u|^2\big)\le C_6\int_M\big(|\nabla\triangle u|^2+|\nabla u|^2\cdot|Rm|^2\big).
\end{equation}
\end{lemm}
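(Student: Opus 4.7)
The plan is to establish both inequalities through integration by parts on the compact Kähler manifold $M$, combined with the Kähler commutation identity $[\nabla_i,\nabla_{\bar j}]T = R\star T$ for tensors $T$; no boundary terms arise, and since the metric is parallel, any index contraction commutes freely with $\nabla$ and $\bar\nabla$. Only commutators of opposite-type derivatives produce curvature contributions, and all constants track only the combinatorial weight of those contractions.

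For the first inequality, expand
\[
\int_M |\nabla\bar\nabla u|^4\,\omega^n \;=\; \int_M u_{i\bar j}u^{\bar i j}\cdot u_{k\bar l}u^{\bar k l}\,\omega^n
\]
and integrate by parts once in an unbarred derivative of one $\nabla\bar\nabla u$ factor, writing $u^{\bar k l}=\nabla^l u^{\bar k}$ so as to expose a bare $\nabla u$. The Leibniz rule then produces a sum of terms each of the schematic form
\[
\int_M (\nabla u)\,(\nabla\bar\nabla u)\,(\nabla\bar\nabla u)\,(\nabla^3 u),
\]
where the third-order factor is $\nabla\nabla\bar\nabla u$ or $\bar\nabla\nabla\nabla u$ depending on which of the two remaining derivatives was differentiated. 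Applying Cauchy--Schwarz in the form
\[
\Big|\int_M (\nabla u)(\nabla\bar\nabla u)^2(\nabla^3 u)\Big|\;\le\;\Big(\int_M|\nabla u|^2|\nabla^3u|^2\Big)^{1/2}\Big(\int_M|\nabla\bar\nabla u|^4\Big)^{1/2}
\]
and absorbing the trailing factor back into the left-hand side yields the first inequality.

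For the second inequality, start from $\int_M|\nabla\nabla\bar\nabla u|^2 = \int_M u_{ij\bar k}u^{\bar i\bar j k}\omega^n$ and integrate by parts in a holomorphic derivative, say $\nabla_j$, turning the integrand into $-\int_M u_{i\bar k}\,\nabla_j u^{\bar i\bar j k}$, which contains a four-times covariant derivative of $u$. Commuting the two opposite-type derivatives that now appear via the Kähler curvature identity replaces the contracted fourth derivative by $\nabla^2\triangle u$ modulo Riemann-curvature couplings with $\nabla\bar\nabla u$ and $\nabla u$. A further integration by parts converts $\int \nabla\bar\nabla u\cdot\nabla^2\triangle u$ into $\int \nabla\triangle u\cdot\nabla\nabla\bar\nabla u$ plus additional curvature terms involving $\nabla u$. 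Cauchy--Schwarz then gives
\[
\Big|\int \nabla\triangle u\cdot\nabla\nabla\bar\nabla u\Big|\le \Big(\int|\nabla\triangle u|^2\Big)^{1/2}\Big(\int|\nabla\nabla\bar\nabla u|^2\Big)^{1/2}
\]
and an analogous bound
\[
\Big|\int Rm\cdot\nabla u\cdot\nabla\nabla\bar\nabla u\Big|\le \Big(\int|Rm|^2|\nabla u|^2\Big)^{1/2}\Big(\int|\nabla\nabla\bar\nabla u|^2\Big)^{1/2}
\]
for the curvature pieces; absorbing the common $(\int|\nabla\nabla\bar\nabla u|^2)^{1/2}$ factor back into the left gives the second estimate. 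The conjugate term $\int|\bar\nabla\nabla\nabla u|^2$ is handled identically.

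The main obstacle is the bookkeeping of the curvature terms produced at each commutation: every commutator $[\nabla,\bar\nabla]$ acting on a tensor of rank $r$ contributes $O(r)$ distinct Riemann-curvature contractions, and one must verify that after the successive integrations by parts the only low-order factor multiplying $Rm$ is $\nabla u$ rather than $u$ itself, which would weaken the final bound. The Kähler hypothesis is essential here: without it, torsion contributions of the form $T\star\nabla^2 u$ would appear in the commutator identities, and these could not be absorbed into $|\nabla\triangle u|^2+|\nabla u|^2|Rm|^2$ by the Cauchy--Schwarz scheme above.
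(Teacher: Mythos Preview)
The paper does not supply its own proof of this lemma; it is quoted from \cite{TiZha13} with the remark that ``the estimates in the last lemma are general facts which remain hold for any smooth function.'' Your outline---integration by parts to expose a bare $\nabla u$, K\"ahler commutator identities, Cauchy--Schwarz, and absorption---is indeed the standard route and is how the cited reference proceeds. Your argument for the first inequality is complete.

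For the second inequality there is a gap in the curvature bookkeeping. You correctly observe that after the first integration by parts the commutation produces ``Riemann-curvature couplings with $\nabla\bar\nabla u$ and $\nabla u$''; schematically the integrand is
\[
u_{i\bar k}\bigl(\nabla_{\bar i}\nabla_k\triangle u \;+\; Rm\ast\nabla\bar\nabla u \;+\; \text{lower}\bigr).
\]
The middle contribution is therefore of type $\int_M Rm\ast(\nabla\bar\nabla u)\ast(\nabla\bar\nabla u)$, yet your displayed Cauchy--Schwarz bound addresses only terms of type $\int_M Rm\cdot\nabla u\cdot(\text{third derivative})$. To close the argument you must integrate by parts once more on $\int_M Rm\ast(\nabla\bar\nabla u)^2$, writing one Hessian factor as $\nabla(\bar\nabla u)$ and moving the derivative. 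This produces the desired $Rm\cdot\nabla u\cdot\nabla^3u$ structure together with a $\nabla Rm\cdot\nabla u\cdot\nabla\bar\nabla u$ term; the contracted second Bianchi identity (in K\"ahler form $g^{l\bar m}\nabla_l R_{i\bar j k\bar m}=\nabla_k R_{i\bar j}$) reduces the divergence of curvature to a derivative of Ricci, which after a further integration by parts returns terms of the same two shapes. One must then verify that the residual $\operatorname{Ric}\ast(\nabla\bar\nabla u)^2$ contributions either cancel against the Ricci terms arising directly from the trace of the original commutator, or are absorbed after tracking the precise index contractions. This is exactly the bookkeeping you flag in your closing paragraph, but as written your sketch stops one integration-by-parts short of actually handling the $Rm\ast(\nabla\bar\nabla u)^2$ piece.
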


The estimates in the last lemma are general facts which remain hold for any smooth function.

The following theorem gives the required estimate (\ref{L4 estimate: 1}).

\begin{theo}
There exists $C_7=C_7(\omega_0,\chi)$ such that
\begin{equation}\label{L4 estimate: 3}
\int_t^{t+1}\int_M\big(|\nabla\bar{\nabla}u|^4+|\nabla\nabla\bar{\nabla}u|^2+|\bar{\nabla}\nabla\nabla u|^2\big)\le C_7,\,\forall t\ge 0.
\end{equation}
\end{theo}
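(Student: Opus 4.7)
The plan is to chain the two preceding lemmas with a single time-averaged energy estimate.  Using the first lemma with $\|\nabla u\|_{C^0}\le C_3$, and the second with $\|\nabla u\|_{C^0}\le C_3$ together with the $L^2$ curvature bound $\int_M|Rm|^2\omega^n\le C_5$, each of the three integrands in (\ref{L4 estimate: 3}) is controlled by $C\int_M|\nabla\Delta u|^2\omega^n+C$.  Hence the theorem reduces to
\[
\int_t^{t+1}\!\!\int_M|\nabla\Delta u|^2\,\omega^n\,ds\le C,\qquad t\ge 0.\tag{$\star$}
\]

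Two identities coming from the flow pin down $\nabla\Delta u$: differentiating (\ref{MA}) in $t$ and using $\partial_t\log\omega^n=-R-n$ gives $u'=-R-\kappa$ where $\kappa$ is the Kodaira dimension, while tracing $\Ric_{\omega(t)}+\sqrt{-1}\partial\bar\partial u=-\chi$ against $\omega(t)$ gives $\Delta u=-R-\tr_\omega\chi$.  Together these yield the pointwise bound $|R|\le C_3+nC_2$ (from $|\Delta u|\le C_3$ and $0\le\tr_\omega\chi\le nC_2$) and the decomposition $\nabla\Delta u=\nabla u'-\nabla\tr_\omega\chi$.  For the first piece, $u'$ satisfies the heat-type equation $\partial_t u'=\Delta u'-|\Ric|^2+u'+\kappa$, obtained by combining $u'=-R-\kappa$ with the standard evolution $\partial_t R=\Delta R+|\Ric|^2+R$ along the normalized K\"ahler-Ricci flow.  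Multiplying by $u'$, integrating on $M$, and integrating by parts, while using $|u'|\le C_1$, $\int_M|\Ric|^2\omega^n\le C_5$, the pointwise bound on $R$, and the cohomological bound $\int_M\omega^n=[\hat\omega_t]^n\le C$, yields
\[
\int_M|\nabla u'|^2\omega^n=-\tfrac12\partial_t\!\int_M(u')^2\omega^n+\text{bounded},
\]
and a time integration over $[t,t+1]$ delivers $\int_t^{t+1}\!\int_M|\nabla u'|^2\omega^n\,ds\le C$.

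The main obstacle is the second piece, $\int_t^{t+1}\!\int_M|\nabla\tr_\omega\chi|^2\omega^n\,ds$, since $\omega(t)$ may collapse along the non-ample locus of $K_M$ and pointwise bounds on its derivatives are unavailable.  I would handle it by exploiting $d\chi=0$: via the K\"ahler identity $\nabla_k(\tr_\omega\chi)=g^{i\bar j}\nabla_i\chi_{k\bar j}$, successive integration by parts transfers the derivatives onto the fixed smooth semipositive form $\chi$; the Ricci identity commutators vanish on purely anti-holomorphic derivatives in K\"ahler geometry, and the remaining error terms are of the shape $\int_M|Rm|\cdot|\chi|_\omega^2\,\omega^n$ or fixed geometric quantities determined by $\chi$ alone.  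These are controlled by Cauchy-Schwarz using the pointwise bound $|\chi|_\omega\le \sqrt{n}\,C_2$ (from $\chi\le C_2\omega$) and $\int_M|Rm|^2\omega^n\le C_5$.  This yields the desired bound on $\int_t^{t+1}\!\int_M|\nabla\tr_\omega\chi|^2\omega^n\,ds$, and combined with the energy bound on $\int|\nabla u'|^2$ gives $(\star)$ and hence the theorem.
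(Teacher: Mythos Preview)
Your reduction to $(\star)$ is correct and closely parallels the paper's argument: the paper works directly with the evolution of $(\Delta u)^2$ rather than splitting $\Delta u=u'+\kappa-\tr_\omega\chi$, but either way one is led to the same crucial quantity, namely $\int_M|\nabla\tr_\omega\chi|^2\,\omega^n$. Your energy argument for $\int_M|\nabla u'|^2$ is fine.

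The gap is in your handling of $\int_M|\nabla\tr_\omega\chi|^2$. The sentence ``successive integration by parts transfers the derivatives onto the fixed smooth semipositive form $\chi$ \dots\ fixed geometric quantities determined by $\chi$ alone'' does not hold up. After one integration by parts using $\nabla_k(\tr_\omega\chi)=g^{i\bar j}\nabla_i\chi_{k\bar j}$ and commuting, the principal term that remains is of the form $\int g^{k\bar l}g^{i\bar j}g^{p\bar q}\,\nabla_p\chi_{k\bar j}\,\nabla_{\bar l}\chi_{i\bar q}\,\omega^n$, i.e.\ a contraction of $\nabla\chi$ with itself. Here $\nabla$ is the Levi-Civita connection of the \emph{evolving} metric $\omega(t)$, so $\nabla_i\chi_{k\bar j}=\partial_i\chi_{k\bar j}-\Gamma^r_{ik}(t)\,\chi_{r\bar j}$ involves the Christoffel symbols of $\omega(t)$, which are not bounded along the flow (they blow up near the non-ample locus). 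These terms are therefore \emph{not} ``fixed geometric quantities determined by $\chi$ alone'', and the curvature commutator terms $\int|Rm|\,|\chi|_\omega^2$ are not the only obstruction.

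The paper closes this gap with the Chern--Lu/Schwarz inequality for the holomorphic map $\pi:(M,\omega(t))\to(\mathbb{CP}^N,\omega_{FS})$:
\[
\Delta\big(\tr_\omega\chi\big)\;\ge\;-|\Ric|\,\tr_\omega\chi\;-\;C(\tr_\omega\chi)^2\;+\;\frac{|\nabla(\tr_\omega\chi)|^2}{\tr_\omega\chi}.
\]
The point is that the Bochner formula for $\tr_\omega\chi$ contains a term $|\nabla d\pi|^2\ge 0$ (the ``second fundamental form'' of $\pi$), and Cauchy--Schwarz gives $|\nabla\tr_\omega\chi|^2\le(\tr_\omega\chi)\,|\nabla d\pi|^2$; this sign is what allows one to absorb the problematic covariant derivatives of $\chi$. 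Since $0\le\tr_\omega\chi\le C$, one obtains $|\nabla\tr_\omega\chi|^2\le C\big(\Delta\tr_\omega\chi+|\Ric|+1\big)$ pointwise, and integrating over $M$ (the Laplacian integrates to zero) yields $\int_M|\nabla\tr_\omega\chi|^2\le C\int_M(1+|\Ric|)\le C$, uniformly in $t$. You should replace your integration-by-parts paragraph with this argument.
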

\begin{proof}
By the previous two lemmas we are sufficient to prove a uniform $L^2$ bound of $\nabla\triangle u$. Recall the evolution of $\triangle u$, cf. (3.22) in \cite{SoTi11},
$$\frac{\partial}{\partial t}\triangle u=\triangle\triangle u+\triangle u+\langle\Ric,\partial\bar{\partial}u\rangle_\omega+\triangle\big(\tr_\omega\chi\big).$$
Thus,
\begin{equation}
\frac{\partial}{\partial t}(\triangle u)^2=\triangle(\triangle u)^2-2|\nabla\triangle u|^2+2(\triangle u)^2+2\triangle u\langle\Ric,\partial\bar{\partial}u\rangle+2\triangle u\triangle\big(\tr_\omega\chi\big).
\end{equation}
Integrating over the manifolds gives
\begin{align}
\int_M|\nabla\triangle u|^2
&\le\int_M\bigg((\triangle u)^2+|\triangle u||\Ric||\nabla\bar{\nabla}u|-\nabla_i\triangle u\nabla_{\bar{i}}\big(\tr_\omega\chi\big)-\frac{1}{2}\frac{\partial}{\partial t}(\triangle u)^2\bigg)\nonumber\\
&\le\int_M\bigg(\frac{1}{2}|\nabla\triangle u|^2+(\triangle u)^2+(\triangle u)^2|\Ric|^2+|\nabla\bar{\nabla}u|^2+2|\nabla\big(\tr_\omega\chi\big)|^2\bigg)\nonumber\\
&\hspace{1cm}-\frac{1}{2}\int_M(\triangle u)^2(s+n)-\frac{1}{2}\frac{d}{d t}\int_M(\triangle u)^2.\nonumber
\end{align}
Applying the uniform bound of $\triangle u$ and above lemma, we get
$$\int_M|\nabla\triangle u|^2\le C\int_M\big(1+|\nabla(\tr_\omega\chi)|^2\big)-\frac{d}{dt}\int_M(\triangle u)^2.$$
Integrating over the time interval we have
\begin{equation}\label{L4 estimate: 4}
\int_t^{t+1}\int_M|\nabla\triangle u|^2\le C\int_t^{t+1}\int_M\big(1+|\nabla(\tr_\omega\chi)|^2\big)+C,\,\forall t\ge 0.
\end{equation}
The term $|\nabla(\tr_\omega\chi)|^2$ can be estimated through the Schwarz lemma. Recall the following formula
\begin{equation}
\triangle\big(\tr_\omega\chi\big)\ge-|\Ric|\tr_\omega\chi-C(\tr_\omega\chi)^2
+\frac{|\nabla(\tr_\omega\chi)|^2}{\tr_\omega\chi}
\end{equation}
where $C$ is a universal constant given by the upper bound of the bisectional curvature of $\omega_{FS}$ on $\mathbb{C}P^n$. Because $0\le\tr_\omega\chi\le C$ under the flow, we have
$$|\nabla(\tr_\omega\chi)|^2\le C\bigg(\triangle\big(\tr_\omega\chi\big)+C|\Ric|+C\bigg).$$
Thus,
$$\int_M|\nabla(\tr_\omega\chi)|^2\le C\int_M(1+|\Ric|)\le C$$
uniformly. Substituting into (\ref{L4 estimate: 4}) we get the desired estimate.
\end{proof}

To prove (\ref{L4 estimate: 11}) we use the $L^2$ estimate to traceless Ricci curvature following the calculation by Y. Zhang \cite{ZhaY09}.

\begin{lemm}
Under the K\"ahler-Ricci flow,
\begin{equation}
\int_t^{t+1}\int_M|\Ric_{\omega(s)}+\omega(s)|^2\omega(s)^nds\rightarrow 0,\mbox{ as }t\rightarrow\infty.
\end{equation}
\end{lemm}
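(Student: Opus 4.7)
The plan is to reduce the $L^2$-decay of $\Ric_{\omega(s)}+\omega(s)$ to the time-averaged $L^2$-decay of the scalar $R+n$ via a cohomological identity, and then to derive the latter from an energy/monotonicity argument due to Y.~Zhang. The cohomological observation is that $[\Ric_{\omega(t)}+\omega(t)]=[\omega(t)]-[\chi]=e^{-t}([\omega_0]-[\chi])$, using $[\Ric_\omega]=-[\chi]$ (since $\chi\in c_1(K_M)$) and $[\omega(t)]=[\hat\omega_t]=e^{-t}[\omega_0]+(1-e^{-t})[\chi]$. Applying the pointwise identity
\[
|\alpha|^2\,\omega^n \;=\; (\tr_\omega\alpha)^2\,\omega^n \;-\; n(n-1)\,\alpha\wedge\alpha\wedge\omega^{n-2},
\]
valid for any real $(1,1)$-form $\alpha$, to $\alpha=\Ric_{\omega(t)}+\omega(t)$ and integrating, the wedge-term becomes the cohomological invariant $[\Ric+\omega]^2\cdot[\omega(t)]^{n-2}=O(e^{-2t})$, and one obtains
\[
\int_M|\Ric_{\omega(t)}+\omega(t)|^2\,\omega(t)^n \;=\; \int_M(R+n)^2\,\omega(t)^n \;+\; O(e^{-2t}),
\]
since $\tr_\omega(\Ric+\omega)=R+n$. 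The lemma thus reduces to $\int_t^{t+1}\!\int_M(R+n)^2\omega^n\,ds\to 0$ as $t\to\infty$.

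The next step uses the scalar identity $R+n=-(\varphi'+\varphi'')$, obtained by differentiating the parabolic Monge--Amp\`ere equation in $t$ (with Kodaira dimension $\kappa=n$). Squaring and integrating gives
\[
\int_M(R+n)^2\omega^n \;=\; \int_M(\varphi')^2\omega^n + 2\int_M\varphi'\varphi''\omega^n + \int_M(\varphi'')^2\omega^n,
\]
and $\partial_t\omega^n=-(R+n)\omega^n$ converts the cross term into
\[
2\!\int_M\!\varphi'\varphi''\omega^n \;=\; \tfrac{d}{dt}\!\int_M(\varphi')^2\omega^n + \int_M(\varphi')^2(R+n)\omega^n.
\]
Combining with the monotonicity of the Mabuchi $K$-energy along the normalized flow (whose derivative is $-\int_M(R+n)\varphi'\omega^n$ up to $O(e^{-t})$-errors) and its lower boundedness on a minimal model of general type due to Song \cite{So14-1}, together with the uniform $L^\infty$-bounds on $\varphi'$ and $\varphi''=\Delta u+\tr_\omega\chi-n-\varphi'$ and the cohomological identity $\int_M(R+n)\omega^n=O(e^{-t})$, one derives an energy estimate yielding $\int_0^\infty\!\int_M(R+n)^2\omega^n\,dt<\infty$. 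Fubini's theorem then gives the lemma.

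The main technical obstacle is this last step, namely closing the energy identity so that the time-integral of $\int_M(R+n)^2\omega^n$ is finite. Although each quantity appearing ($\varphi'$, $\varphi''$, and hence $R+n$) is pointwise bounded, so that $\int_M(R+n)^2\omega^n$ is uniformly bounded in $t$, obtaining its \emph{integrability} in time requires a delicate rearrangement of the cross-term $\int_M\varphi'\varphi''\omega^n$ and the nonlinear contribution $\int_M(\varphi')^2(R+n)\omega^n$ into the time-derivative of a bounded monotone functional, so that cumulative errors remain summable. This is precisely the content of Y.~Zhang's computation \cite{ZhaY09}, adapted here to the normalized K\"ahler-Ricci flow on a minimal manifold of general type.
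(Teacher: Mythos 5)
Your opening reduction is correct: the pointwise identity $|\alpha|^2\omega^n=(\tr_\omega\alpha)^2\omega^n-n(n-1)\,\alpha\wedge\alpha\wedge\omega^{n-2}$ together with $[\Ric_{\omega(t)}+\omega(t)]=e^{-t}([\omega_0]-[\chi])$ does give $\int_M|\Ric+\omega|^2\omega^n=\int_M(R+n)^2\omega^n+O(e^{-2t})$, and $R+n=-(\varphi'+\varphi'')$ is right. But the step that carries all the content --- showing $\int_t^{t+1}\int_M(R+n)^2\omega^n\,ds\to0$ --- is never actually proved. Expanding $(R+n)^2=(\varphi'+\varphi'')^2$ and converting the cross term into $\frac{d}{dt}\int_M(\varphi')^2\omega^n$ is circular: the leftover terms $\int_M(\varphi')^2\omega^n$ and $\int_M(\varphi'')^2\omega^n$ are only bounded, not integrable in time, and are exactly as hard as the quantity you started from. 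The Mabuchi-energy sketch does not close the gap either: (i) even in the model fixed-class situation the dissipation of the K-energy along the flow is $\int_M|\nabla\varphi'|^2\omega^n$ (the term $-\int_M(R+n)\varphi'\omega^n$ integrates by parts, since $R+n$ agrees with $-\triangle\varphi'$ up to decaying corrections), so a lower-bounded monotone K-energy would at best give time-integrability of $\|\nabla\varphi'\|_{L^2}^2$, which does not control $\int_M(R+n)^2\omega^n$; (ii) on a non-ample minimal model of general type the class $[\omega(t)]$ moves, so ``the'' Mabuchi energy, its monotonicity and its lower bound are not quotable facts, and \cite{So14-1} contains no such lower bound; (iii) your final appeal to \cite{ZhaY09} mischaracterizes that computation, which is not a K-energy argument at all.

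The paper's proof (which is essentially Y.~Zhang's computation) closes the estimate by a short direct argument you should compare with: integrate the evolution equation $\frac{\partial}{\partial t}R=\triangle R+|\Ric+\omega|^2-(R+n)$ against $\omega^n$, use the maximum-principle bound $R+n\ge-Ce^{-t}$, the uniform two-sided scalar curvature bound of \cite{Zh09}, and the purely cohomological identity $\int_M(R+n)\omega^n=n e^{-t}\int_M(\omega_0-\chi)\wedge\hat{\omega}_t^{\,n-1}=O(e^{-t})$, to get $\int_M|\Ric+\omega|^2\omega^n\le\frac{d}{dt}\int_M R\,\omega^n+Ce^{-t}$. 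Since $\int_M R\,\omega^n$ is uniformly bounded, this yields $\int_0^\infty\int_M|\Ric+\omega|^2\omega^n\,dt<\infty$, and the lemma follows; note that this works directly with $|\Ric+\omega|^2$ (whose square appears with a favorable sign in the evolution of $R$), so even your trace reduction, though correct, is unnecessary. To repair your write-up you should replace the K-energy paragraph by this evolution-equation argument (or an equivalent one), since without it the lemma is not established.
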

\begin{proof}
Recall the evolution of scalar curvature $R=\tr_\omega\Ric$, cf. \cite{ZhaY09},
$$\frac{\partial}{\partial t}R=\triangle R+|\Ric|^2+R=\triangle R+|\Ric+\omega|^2-(R+n).$$
The maximum principle shows that $\breve{R}=\inf R$ satisfies $\frac{d}{dt}\breve{R}\ge-(\breve{R}+n)$, which implies immediately
\begin{equation}\label{scalar curvature}
\breve{R}(t)+n\ge e^{-t}\min\big(\breve{R}(0)+n,0\big)\ge-Ce^{-t}
\end{equation}
for some $C=C(\omega_0)>0$. Then,
\begin{eqnarray}
\int|\Ric+\omega|^2\omega^n&=&\int\big(\frac{\partial}{\partial t}R+R+n\big)\omega^n\nonumber\\
&=&\frac{d}{dt}\int R\omega^n+\int(R+n)(R+1)\omega^n\nonumber\\
&=&\frac{d}{dt}\int R\omega^n+\int(R+n+Ce^{-t})(R+1)\omega^n-Ce^{-t}\int(R+1)\omega^n\nonumber\\
&\le&\frac{d}{dt}\int R\omega^n+C\int(R+n)\omega^n+Ce^{-t}\nonumber
\end{eqnarray}
where we used the uniform bound of scalar curvature and volume of $\omega(t)$. The integration of $R+n$ can be computed as
$$\int(R+n)\omega^n=n\int(\Ric+\omega)\wedge\omega^{n-1}=n\int(-\chi+\hat{\omega})\wedge\hat{\omega}^{n-1}
=ne^{-t}\int(\omega_0-\chi)\wedge\hat{\omega}^{n-1}.$$
Thus, $\int(R+n)\omega^n\le Ce^{-t}$. Then we have
$$\int_0^\infty\int|\Ric+\omega|^2\omega(t)^ndt\le\lim_{t\rightarrow\infty}\int R(t)\omega(t)^n-\int R(0)\omega_0^n+C\le C.$$
The lemma is proved by this estimate.
\end{proof}

The estimate (\ref{L4 estimate: 11}) when $2\le p<4$ then is a direct consequence of the H\"older inequality
$$\int_t^{t+1}\int_M|\Ric+\omega|^p
\le\bigg(\int_t^{t+1}\int_M|\Ric+\omega|^4\bigg)^{\frac{p-2}{2}}\bigg(\int_t^{t+1}
\int_M|\Ric+\omega|^2\bigg)^{\frac{4-p}{2}}.$$
When $0<p<2$ the estimate (\ref{L4 estimate: 11}) is obvious.

\begin{rema}
M. Simon presented in \cite{Si15} an $L^4$ Ricci curvature estimate under Ricci flow on a four-manifold. Combined with the K\"ahler condition, his arguments can be adapted to give an alternative proof
of our estimate. Another related integral bound of curvature can be found in \cite{BaZh15}.
\end{rema}

\subsection{Cheeger-Gromov convergence}

When the manifold $M$ is of general type, the K\"ahler-Ricci flow (\ref{KRF}) should converge in the Gromov-Hausdorff topology to a singular K\"ahler-Einstein metric on the canonical model; cf. Conjecture 4.1 in \cite{So14-2}. In this subsection we confirm this convergence without any curvature assumption in the case of the dimension less than or equal to 3.

Recall the holomorphic map $\pi:M\rightarrow\mathbb{C}P^N$ by a basis of $H^0(M;K_M^\ell)$ for some large $\ell$. Its image $M_{can}=\pi(M)$ is called the canonical model of $M$. Let $E\subset M$ be the exceptional locus of $\pi$. Then we have

\begin{theo}[\cite{TiZh06, So14-1}]\label{a prior theorem}
Let $M$ be a smooth minimal model of general type %of dimension equal to $2$ or $3$
and $\omega(t)$ be any solution to the K\"ahler-Ricci flow {\rm(\ref{KRF})}. Then,
\begin{itemize}
\item[{\rm(1)}] $\omega(t)$ converges in the current sense to a K\"ahler-Einstein metric $\omega_\infty$, the convergence takes place smoothly outside the exceptional locus $E$;

\item[{\rm(2)}] the metric completion of $(M\backslash E,\omega_\infty)$ is homeomorphic to $M_{can}$, so it is compact.
\end{itemize}
\end{theo}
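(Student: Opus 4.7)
The plan is to exploit the degenerate complex Monge-Amp\`ere formulation already set up. Since $M$ is of general type, the Kodaira dimension equals $n$ and the flow equation (\ref{MA}) reduces to
\begin{equation*}
\frac{\partial\varphi}{\partial t}=\log\frac{(\hat\omega_t+\sqrt{-1}\partial\bar\partial\varphi)^n}{\Omega}-\varphi,
\end{equation*}
with $\hat\omega_t\to\chi$, where $\chi$ is semi-positive and strictly positive precisely on the ample locus $M\backslash E$. I expect the limit to be $\omega_\infty=\chi+\sqrt{-1}\partial\bar\partial\varphi_\infty$ with $\varphi_\infty$ solving the degenerate K\"ahler-Einstein equation $(\chi+\sqrt{-1}\partial\bar\partial\varphi_\infty)^n=e^{\varphi_\infty}\Omega$ in the pluripotential sense.

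For part (1), I would first extract the potential limit. The $C^0$ bounds on $\varphi$ and $\varphi'$ from the Song-Tian lemma quoted earlier guarantee that $\varphi$ is uniformly bounded, and a monotonicity argument applied to a suitable shift of $\varphi$ (using that $\partial_t(\varphi+\varphi')=\Delta_\omega\varphi'$) produces convergence in $L^1(M,\Omega)$ to some $\varphi_\infty\in L^\infty$; uniqueness of the bounded pluripotential solution of the limit Monge-Amp\`ere equation (Ko{\l}odziej / Eyssidieux-Guedj-Zeriahi) upgrades this to full convergence of the whole family. On any compact subset of $M\backslash E$ the form $\chi$ is K\"ahler, so the parabolic Schwarz lemma of Song-Tian gives local two-sided bounds $c\chi\le\omega(t)\le C\chi$; parabolic Evans-Krylov together with Schauder theory then yield uniform local $C^k$ estimates. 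Smooth convergence of $\omega(t)$ to $\omega_\infty$ on $M\backslash E$ follows, and passing to the limit in (\ref{KRF}) gives $\Ric(\omega_\infty)=-\omega_\infty$ there.

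For part (2), I would combine the smooth convergence on $M\backslash E$ with Song's diameter bound $\diam(M,\omega(t))\le D$ from \cite{So14-1}. Since $\pi$ restricts to a biholomorphism of $M\backslash E$ onto its image in the regular part of $M_{can}$, the pushforward $\pi_*\omega_\infty$ is a smooth K\"ahler-Einstein metric there, and the diameter bound forces the metric completion $(\overline{M\backslash E},d_{\omega_\infty})$ to be compact. The map $\pi$ will extend continuously to a surjection from this completion onto $M_{can}$; injectivity on the regular part is automatic, and on $\pi(E)$ one uses $\omega_\infty\le C\chi=(C/\ell)\,\pi^*\omega_{FS}$ to show that any Cauchy sequence whose image converges to a singular point must itself have a unique limit in the completion, yielding the desired homeomorphism.

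The main obstacle will be part (2): identifying the metric completion with $M_{can}$ rather than some larger space obtained, say, by a partial blow-up along $E$. This rests entirely on the diameter bound of \cite{So14-1}, which rules out long collapsing necks over $E$, together with the fact that the limit Monge-Amp\`ere measure puts no mass on $E$ (a consequence of the $L^\infty$ bound on $\varphi_\infty$ and the Bedford-Taylor continuity of the Monge-Amp\`ere operator). A secondary technical point is ensuring that the $L^1$ convergence of $\varphi(t)$ is genuine convergence of the whole family and not merely subsequential; this is precisely where the uniqueness theory for bounded pluripotential solutions of the degenerate Monge-Amp\`ere equation is essential.
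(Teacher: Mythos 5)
First, a point of orientation: the paper does not prove this theorem at all --- it is quoted as background from \cite{TiZh06} (part (1)) and \cite{So14-1} (part (2)), so what you are really sketching is a reconstruction of those two papers. Your outline of part (1) is essentially the Tsuji/Tian--Zhang argument and is sound as a sketch, with two small caveats: the identity you invoke for $\varphi+\varphi'$ is not quite the right evolution equation (the standard route is the maximum-principle bound on $\dot\varphi$, e.g.\ $\dot\varphi\le C(1+t)e^{-t}$, which makes $\varphi$ almost-decreasing and hence convergent), and the local upper bound $\omega(t)\le C\chi$ on compact subsets of $M\backslash E$ does not come from the parabolic Schwarz lemma (which gives only $\chi\le C\omega(t)$) but from a Tsuji-type second-order estimate using a defining section of the exceptional divisor as a barrier, with constants degenerating near $E$.

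The genuine gap is in part (2), which is the deep content of \cite{So14-1}. Your injectivity argument rests on the inequality $\omega_\infty\le C\chi=(C/\ell)\pi^*\omega_{FS}$, and this is backwards: the estimate available (and quoted in this paper) is $\chi\le C_2\,\omega(t)$, hence $\chi\le C\omega_\infty$, while $\omega_\infty\le C\chi$ is in general false near $E$, where the singular K\"ahler--Einstein metric blows up relative to the semi-positive form $\chi$. (If that reversed bound held, the completion would be Lipschitz-dominated by the Fubini--Study geometry of $M_{can}$ and most of Song's work would be unnecessary.) With the correct inequality one only gets that $\pi$ extends to a Lipschitz surjection from the metric completion onto $M_{can}$; the hard step is injectivity over $\pi(E)$, i.e.\ showing that the completion neither crushes distinct limit points together nor adds a nontrivial continuum over a single singular point of $M_{can}$. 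Song's proof achieves this through uniform non-collapsing and diameter estimates and, crucially, through uniform $L^\infty$ and gradient bounds for an orthonormal basis of $H^0(M,K_M^\ell)$, which produce Lipschitz functions on the completion that separate points (the same mechanism this paper uses at the end of Section 3 to identify $M_\infty$ with $M_{can}$). Neither the diameter bound alone nor the vanishing of the Monge--Amp\`ere mass on $E$ substitutes for this point-separation argument, so as written the proof of (2) does not close.
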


\begin{rema}
It is known that the exceptional locus $E$ coincides with the non-ample or non-K\"ahler locus of the canonical class; cf. \cite{TiZh06} and \cite{CoTo13}.
\end{rema}

Suppose $\dim_{\mathbb{C}}M\le 3$. Let $t_i\rightarrow\infty$ be a sequence of times such that
\begin{equation}\label{L4 estimate: 6}
\int_M|\Ric_{\omega(t_i)}+\omega(t_i)|^{\frac{7}{2}}\omega(t_i)^n\rightarrow 0,\,\mbox{ as }i\rightarrow\infty.
\end{equation}
Choose a regular point $x_0\in M\backslash E$. The volume of the unit ball $\vol(B_{1,\omega(t_i)}(x_0))$ has a uniform lower bound. By the $L^p$ extension of Cheeger-Colding-Tian theory, Theorem 2.3, we may assume that $(M,\omega(t_i),x_0)$ converges in the Cheeger-Gromov sense to a limit metric space $(M_\infty,d_\infty,x_\infty)$. Since the metric $\omega(t)$ converges smoothly on $M\backslash E$, we may view $(M\backslash E,\omega_\infty)$ as a subset of $(M_\infty,d_\infty)$ through a locally isometric embedding.

Let $M_\infty=\mathcal{R}\cup\mathcal{S}$ the regular/singular decomposition of $M_\infty$.

\begin{lemm}
Suppose $x\in\mathcal{R}$, $0<\alpha<2-\frac{4n}{7}$. There exists $r=r(x)>0$ such that any $x_i\in M$ converging to $x$ has a holomorphic coordinate $(z^1,\cdots,z^n)$ on $B_{r,\omega(t_i)}(x_i)$ which satisfies
$$\frac{1}{2}\le(g_{k\bar{\ell}})\le 2,\,\|g_{k\bar{\ell}}\|_{C^\alpha}\le 2$$
where $g_{k\bar{\ell}}=\omega(t_i)(\frac{\partial}{\partial z^k},\frac{\partial}{\partial\bar{z}^\ell})$.
\end{lemm}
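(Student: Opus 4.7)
The plan is to first extract uniform $C^\alpha$ harmonic coordinates near $x_i$ from the regularity theory (Theorem~\ref{ChCo}), and then convert them to holomorphic coordinates by solving a small $\bar\partial$-problem using the integrable Kähler structure. For the first step, apply Theorem~\ref{ChCo} with $m = 2n$, $p = 7/2$, and $\Lambda = -1/(m-1)$ (so that $\Ric-(m-1)\Lambda g = \Ric+g$); the hypothesis (\ref{Ricci condition: 3}) follows from (\ref{L4 estimate: 6}), while (\ref{volume comparison: 4}) follows from the uniform volume lower bound at the regular base point $x_0 \in M\setminus E$. Since $x \in \mathcal{R}$ has tangent cone $\mathbb{R}^{2n}$, an Anderson-type $\epsilon$-regularity argument in its $L^p$-Ricci version (cf.\ \cite{An90, PeWe01, TiZha13}) gives, for any preassigned small $\delta>0$, radii $r_1 = r_1(x,\delta) > 0$ and indices $I = I(x,\delta)$ such that for $i \ge I$ one has harmonic coordinates $(y^1,\dots,y^{2n})$ on $B_{r_1,\omega(t_i)}(x_i)$ with
$$(1-\delta)\delta_{ab} \le g_{ab} \le (1+\delta)\delta_{ab}, \qquad \|g_{ab}\|_{C^\alpha(B_{r_1})} \le 1+\delta.$$

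For the second step, exploit the integrable, parallel complex structure $J$ of $\omega(t_i)$. Expressed in the harmonic frame above, $J$ is a $C^\alpha$-endomorphism of $\mathbb{R}^{2n}$; after a linear rotation of the frame we may assume $J(x_i) = J_0$ (the standard complex structure), and then $\|J - J_0\|_{C^\alpha}\le C\delta$. Set $w^k = y^k + \sqrt{-1}\,y^{k+n}$ for $k=1,\dots,n$; each $w^k$ is holomorphic at $x_i$ and $\bar\partial_J w^k$ vanishes at $x_i$ with $\|\bar\partial_J w^k\|_{C^\alpha(B_{r_1})} \le C\delta$. We then seek holomorphic coordinates of the form $z^k = w^k + u^k$, equivalently
$$\bar\partial_J u^k = -\bar\partial_J w^k,\qquad u^k(x_i) = 0,\quad du^k(x_i) = 0,$$
on $B_{r_1/2}(x_i)$. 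Since each $M_i$ is smooth Kähler, such $u^k$ exist by the standard theory for the overdetermined $\bar\partial_J$-system (e.g.\ solving a Hodge-theoretic Neumann problem for the Laplacian $\triangle_J = \bar\partial_J\bar\partial_J^\ast + \bar\partial_J^\ast\bar\partial_J$), and the Schauder estimate yields $\|u^k\|_{C^{1,\alpha}(B_{r_1/2})} \le C\delta$.

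For the third step, choose $\delta$ small enough that $z=(z^1,\dots,z^n)$ is a biholomorphism from some ball $B_r(x_i)$, $r=r(x)>0$, onto a neighborhood in $\mathbb{C}^n$; the pullback metric $g_{k\bar\ell}$ is then a $C^\alpha$-small perturbation of the Euclidean Hermitian metric and satisfies $\tfrac12 \le (g_{k\bar\ell}) \le 2$ and $\|g_{k\bar\ell}\|_{C^\alpha}\le 2$, as required. The main obstacle is the \emph{uniform-in-$i$} Schauder estimate in the second step when $J$ is only $C^\alpha$. This is handled by viewing $\bar\partial_J \oplus \bar\partial_J^\ast$ as an elliptic first-order operator whose principal part is the standard $\bar\partial\oplus\bar\partial^\ast$ in the adjusted frame, and whose lower-order coefficients have uniform $C^\alpha$-bounds; classical Schauder theory then supplies a priori estimates depending only on $\|J\|_{C^\alpha}$, hence uniform in $i$. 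Once this estimate is in hand, Steps 1--3 above assemble into the lemma.
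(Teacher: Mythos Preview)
Your overall strategy---first establish $C^\alpha$ harmonic coordinates via the $L^p$-Ricci regularity theory, then correct to holomorphic coordinates by solving a $\bar\partial$-equation---matches the paper's. The gap is in your second step.

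You assert that in the harmonic frame $J$ is a $C^\alpha$-endomorphism with $\|J-J_0\|_{C^\alpha}\le C\delta$, and then invoke Schauder estimates for $\bar\partial_J\oplus\bar\partial_J^{\ast}$. But the harmonic-coordinate theory (Anderson, Petersen--Wei) gives uniform $C^\alpha$ (equivalently $W^{2,p}$) control only of the \emph{metric} $g$; it says nothing directly about $J$. The relation $\nabla J=0$ ties $\partial J$ to the Christoffel symbols, which are merely in $W^{1,p}$ with $p=7/2<2n$, so no uniform H\"older bound on $J$ follows by Sobolev embedding. Since $J$ sits in the \emph{principal symbol} of $\bar\partial_J$ (not in lower-order terms, as your last paragraph suggests), the missing $C^\alpha$ control of $J$ means your uniform Schauder estimate is not available as stated. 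Even the $C^0$-closeness of $J$ to $J_0$ is not free: it requires the K\"ahler structure of the limit, not just the metric bounds.

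The paper circumvents this as follows. It first invokes \cite[Section~9]{ChCoTi} to obtain only the weak statement that $J$ is $L^1$-close to $J_0$ in the harmonic frame; combined with the pointwise bound $|J|\le C$ this makes $\bar\partial(v^p+\sqrt{-1}\,v^{n+p})$ small in $L^2$. One then solves $\bar\partial f^p=\bar\partial(v^p+\sqrt{-1}\,v^{n+p})$ on a pseudoconvex subdomain by H\"ormander's $L^2$ estimate---which needs no coordinate regularity of $J$ whatsoever---obtaining $\|f^p\|_{L^2}\to 0$. The key observation is that $f^p$ is then \emph{harmonic} for the scalar Laplacian $\triangle_{\omega(t_i)}$, being the difference of the harmonic function $v^p+\sqrt{-1}\,v^{n+p}$ and the holomorphic (hence harmonic) function $w^p$. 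The scalar Laplacian \emph{does} have uniformly $C^\alpha$ coefficients in harmonic coordinates (this uses only the metric bound), so interior elliptic regularity upgrades $\|f^p\|_{L^2}\to 0$ to $\sup(|\partial f^p|+|\bar\partial f^p|)\to 0$, and $w^p=v^p+\sqrt{-1}\,v^{n+p}-f^p$ furnishes the desired holomorphic chart. In short: route the regularity through $\triangle_g$, whose coefficients you control, rather than through $\bar\partial_J$, whose coefficients you do not.
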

\begin{proof}
The existence of holomorphic coordinates is well-known. It can be constructed by a slight modification of the local harmonic coordinates. We include a short proof for the reader's convenience. First of all, by the $C^\alpha$ convergence on $\mathcal{R}$, there is a sequence of harmonic coordinate $v_i=(v_i^1,\cdots,v_i^{2n})$ on $B_{r,\omega(t_i)}(x_i)$ for some $r>0$ independent of $i$ such that
$$\frac{3}{4}\le(h_{pq})\le\frac{4}{3}\mbox{ and }r^{-\alpha}\|h_{pq}\|_{C^\alpha}\le\frac{4}{3},$$
where $h_{pq}=g_i(\frac{\partial}{\partial v_i^p},\frac{\partial}{\partial v_i^q})$, $g_i$ is the K\"ahler metric of $\omega(t_i)$, and \cite{PeWe01}
$$\int_{B_{r,\omega(t_i)}(x_i)}|\langle\frac{\partial}{\partial v^p},\frac{\partial}{\partial v^q}\rangle-\delta_{pq}|\omega(t_i)^n\rightarrow 0,\,\mbox{ as }i\rightarrow \infty.$$
Moreover, since each $\omega(t_i)$ is K\"ahler, we may assume that
$$\int_{B_{r,\omega(t_i)}(x_i)}|\langle J\frac{\partial}{\partial v^p},\frac{\partial}{\partial v^{n+q}}\rangle-\delta_{pq}|\omega(t_i)^n\rightarrow 0,\,\mbox{ as }i\rightarrow \infty,$$
for any $1\le p,q\le n$, where $J$ is the complex structure of $M$; see \cite[Section 9]{ChCoTi} for a discussion. Then we choose a pseudoconvex domain $B_{\frac{1}{2}r,\omega(t_i)}(x_i)\subset\Omega_i\subset B_{r,\omega(t_i)}(x_i)$ and solve the $\bar{\partial}$ equation in $\Omega_i$:
$$\bar{\partial}f_i^p=\bar{\partial}(v_i^p+\sqrt{-1}v^{n+p}),\,p=1,\cdots,n.$$
The domain can be chosen as the Euclidean ball in the local coordinate. The equation has solution satisfying the $L^2$ estimate \cite[Theorem 4.3.4]{ChSh}
$$\int_{B_{\frac{1}{2}r,\omega(t_i)}(x_i)}|f_i^p|^2\omega(t_i)^n\le Cr^2\int_{B_{r,\omega(t_i)}(x_i)}\big|\bar{\partial}(v_i^p+\sqrt{-1}v^{n+p})\big|^2\omega(t_i)^n$$
for a universal constant $C$. This implies $\int_{B_{\frac{1}{2}r,\omega(t_i)}(x_i)}|f_i^p|^2\omega(t_i)^n\rightarrow 0$ for all $1\le p\le n$. Then applying the elliptic regularity to $\triangle_{\omega(t_i)}f_i^p=0$ we get
$$\sup_{B_{\frac{1}{4}r,\omega(t_i)}(x_i)}\big(|\partial f_i^p|+|\bar{\partial}f_i^p|\big)\rightarrow 0,\,\forall 1\le p\le n.$$
In particular, the function $w_i^p=v^p+\sqrt{-1}v^{n+p}-f_i^p$, $1\le p\le n$, gives rise to the desired holomorphic coordinate on $B_{\frac{1}{4}r,\omega(t_i)}(x_i)$ whenever $i$ is large enough.
\end{proof}

\begin{lemm}
$M_\infty\backslash(M\backslash E)$ is a closed subset of $M_\infty$ with Hausdoeff codimension $\ge 2$.
\end{lemm}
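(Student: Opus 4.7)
The plan is to reduce the lemma to the following observations. First, the local isometric embedding $\iota:(M\setminus E,\omega_\infty)\hookrightarrow M_\infty$ has open image, so $K:=M_\infty\setminus(M\setminus E)$ is closed. Second, by Theorem \ref{a prior theorem}(2), the metric completion of $(M\setminus E,\omega_\infty)$ is homeomorphic to $M_{can}$, and under this identification the ``added'' points form a proper closed complex analytic subvariety of $M_{can}$ of complex dimension at most $n-1$. The main step is to produce a $1$-Lipschitz surjection $\tilde\iota:M_{can}\to M_\infty$ extending $\iota$, which immediately yields $K\subset\tilde\iota(M_{can}\setminus(M\setminus E))$, and then to invoke the fact that $1$-Lipschitz maps do not increase Hausdorff dimension.

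Openness of $\iota(M\setminus E)$ is immediate: each point has a small $\omega_\infty$-ball mapped isometrically onto an open $d_\infty$-ball of $M_\infty$. The map $\iota$ is moreover globally $1$-Lipschitz (local isometry plus path-length comparison, since paths may take shortcuts in $M_\infty$), and because $M_\infty$ is compact and hence complete, the universal property of metric completion gives the $1$-Lipschitz extension $\tilde\iota:M_{can}\to M_\infty$ with $\tilde\iota|_{M\setminus E}=\iota$.

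To see that $\tilde\iota$ is surjective, I would establish density of $\iota(M\setminus E)$ in $M_\infty$ via the volume identity
\[
\mathcal{H}^{2n}(M_\infty)=\lim_{i\to\infty}\vol_{\omega(t_i)}(M)=[\chi]^n=\int_{M\setminus E}\omega_\infty^n=\mathcal{H}^{2n}(\iota(M\setminus E)),
\]
combining the total volume convergence of Theorem \ref{ChCo}(1) with the fact that the singular K\"ahler--Einstein current $\omega_\infty$ carries its full cohomological mass on $M\setminus E$. This gives $\mathcal{H}^{2n}(K)=0$; since every nonempty open subset of $M_\infty$ meets the regular set $\mathcal R$ and hence has positive $2n$-dimensional Hausdorff measure, $K$ has empty interior. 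Therefore $\iota(M\setminus E)$ is dense, and the compact image $\tilde\iota(M_{can})$ must exhaust $M_\infty$.

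By surjectivity, any $y\in K$ has a preimage $x\in M_{can}$ under $\tilde\iota$; if $x\in M\setminus E$ then $\tilde\iota(x)=\iota(x)\in\iota(M\setminus E)$ would contradict $y\in K$, so $x\in M_{can}\setminus(M\setminus E)$. Thus $K\subset\tilde\iota(M_{can}\setminus(M\setminus E))$, which has Hausdorff dimension at most $2n-2$ as the $1$-Lipschitz image of a complex analytic subvariety of complex dimension $\le n-1$, yielding the desired codimension bound. The main technical obstacle is the volume identity above, specifically the justification of $\int_{M\setminus E}\omega_\infty^n=[\chi]^n$ in the presence of the singularities of $\omega_\infty$ along $E$; this is supplied by the pluripotential theory of singular K\"ahler--Einstein currents underlying Theorem \ref{a prior theorem}.
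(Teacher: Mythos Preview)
Your approach is genuinely different from the paper's. The paper decomposes $M_\infty\setminus(M\setminus E)=\mathcal S\cup\bigl(\mathcal R\setminus(M\setminus E)\bigr)$; the piece $\mathcal S$ has codimension $\ge 4$ by Theorem~\ref{ChCo}(5), and for $\mathcal R\setminus(M\setminus E)$ it argues \emph{locally}: near any $x\in\mathcal R\setminus(M\setminus E)$ the preceding lemma supplies holomorphic coordinates on $B_{r,\omega(t_i)}(x_i)$ with uniform $C^\alpha$ control, and the slices $E\cap B_{r,\omega(t_i)}(x_i)$ (analytic subvarieties of bounded volume) subconverge to a local analytic set $E_\infty\subset B_r(x)$. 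On $\mathcal R$ the limit metric is smooth, so analytic codimension agrees with Hausdorff codimension, giving $\ge 2$. Your route is global: push the compact completion $M_{can}$ onto $M_\infty$ via a $1$-Lipschitz surjection $\tilde\iota$ and bound $\dim_H K$ by $\dim_H\bigl(M_{can}\setminus(M\setminus E)\bigr)$.

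The final step has a genuine gap. You claim that $M_{can}\setminus(M\setminus E)$, being a complex analytic subvariety of complex dimension $\le n-1$, has Hausdorff dimension $\le 2n-2$. But that Hausdorff dimension must be taken in the metric on $M_{can}$ coming from the completion of $(M\setminus E,\omega_\infty)$, and Theorem~\ref{a prior theorem}(2) provides only a \emph{homeomorphism} to the canonical model, which says nothing about Hausdorff dimension. The singular K\"ahler--Einstein metric is singular precisely along $\pi(E)$, and only $\chi\le C\omega_\infty$ is available (not $\omega_\infty\le C\chi$), so you cannot transfer the Fubini--Study Hausdorff dimension of $\pi(E)$ to the K\"ahler--Einstein metric. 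This is exactly why the paper localizes to $\mathcal R$, where the limit metric is smooth and the dimension count is unambiguous.

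A smaller point: you invoke compactness of $M_\infty$, but in the paper's logical order this is deduced only \emph{after} the present lemma (via the next lemma identifying $M_\infty$ with the completion, and then Song's theorem). This is not fatal---completeness of a pointed Gromov--Hausdorff limit suffices for the $1$-Lipschitz extension, and your density argument can be run using $\mathcal H^{2n}(B_r(x_\infty))\le\lim_i\vol_{\omega(t_i)}(M)$ for every $r$---but it should be flagged. The Hausdorff-dimension issue above, however, is not repaired without either the paper's local holomorphic-coordinate argument or additional metric input on $\pi(E)$ beyond what is quoted here.
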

\begin{proof}
Notice that $M\backslash E\subset\mathcal{R}$, $M_\infty\backslash(M\backslash E)=\mathcal{S}\cup\big(\mathcal{R}\backslash(M\backslash E)\big)$ where $\codim\mathcal{S}\ge 4$ by Theorem 2.3 (5). Therefore, it suffices to show that $\codim\big(\mathcal{R}\backslash(M\backslash E)\big)\ge 2$.

For any $x\in\mathcal{R}\backslash(M\backslash E)$ there exists a sequence of points $x_i\in E$ which converges to $x$. By above lemma, there exists local holomorphic coordinate in $B_{r,\omega(t_i)}(x_i)$ for some $r=r(x)>0$ independent of $i$ with required $C^\alpha$ estimate. The intersection $E\cap B_{r,\omega(t_i)}(x_i)$ is a local subvariety with finite volume, so passing to a subsequence, $E\cap B_{r,\omega(t_i)}(x_i)$ converges to a limit analytical set $E_\infty\subset B_{r,\omega_\infty}(x)$. Thus, $\mathcal{R}\backslash(M\backslash E)$ is an analytical set, $\codim \big(\mathcal{R}\backslash(M\backslash E)\big)\ge 2$ as desired.
\end{proof}

\begin{lemm}
$(M_\infty,d_\infty)$ is isometric to the metric completion of $(M\backslash E,\omega_\infty)$.
\end{lemm}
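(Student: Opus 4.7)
The plan is to construct a natural $1$-Lipschitz surjection
$$\bar\iota: \overline{(M\setminus E,\omega_\infty)}\longrightarrow (M_\infty,d_\infty)$$
and then upgrade it to an isometry using the codimension bound from the preceding lemma. First I would build $\bar\iota$: since $\omega(t_i)\to\omega_\infty$ smoothly on $M\setminus E$, the Cheeger--Gromov convergence furnishes a locally isometric embedding $\iota: (M\setminus E,\omega_\infty)\hookrightarrow(M_\infty,d_\infty)$; as $C^1$-path lengths computed with $\omega_\infty$ equal the $d_\infty$-lengths of their images, $\iota$ is $1$-Lipschitz with respect to the intrinsic path-distance $d^{\mathrm{int}}_{\omega_\infty}$ on $M\setminus E$ and extends by continuity to the completion.

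Surjectivity of $\bar\iota$ is immediate from Theorem \ref{a prior theorem}(2), which says the completion is compact so its image is closed, together with the density of $\iota(M\setminus E)$ supplied by the previous lemma (its complement has Hausdorff codimension $\geq 2$, hence empty interior). The crux is then the reverse distance inequality
$$d_\infty(\iota(p),\iota(q))\;\geq\; d^{\mathrm{int}}_{\omega_\infty}(p,q),\qquad p,q\in M\setminus E,$$
which I would prove by approximating any minimizing $d_\infty$-curve $\gamma$ from $\iota(p)$ to $\iota(q)$ by a curve contained in $\iota(M\setminus E)$ of nearly the same length. Two codimension inputs enter. By Theorem \ref{ChCo}(2) the regular set $\mathcal{R}$ is (weakly) convex in $M_\infty$ and its complement $\mathcal{S}$ has real codimension $\geq 4$, so $\gamma$ may be taken inside $\mathcal{R}$; by Theorem \ref{ChCo}(3) the limit metric on $\mathcal{R}$ is $C^\alpha$-K\"ahler, and the previous lemma identifies $A:=\mathcal{R}\setminus\iota(M\setminus E)$ as an analytic subset of $\mathcal{R}$ of real codimension $\geq 2$. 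A generic $C^0$-small perturbation of $\gamma$ inside $\mathcal{R}$ then misses $A$ while changing length by $O(\varepsilon)$; pulling back through $\iota$ gives a curve in $M\setminus E$ of $\omega_\infty$-length at most $d_\infty(\iota(p),\iota(q))+\varepsilon$, and letting $\varepsilon\to 0$ yields the desired inequality.

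The hard part will be the perturbation step: rigorously justifying that a $d_\infty$-short curve in $\mathcal{R}$ can be pushed off the analytic set $A$ and away from $\mathcal{S}$ with only an $\varepsilon$-change in length. This is the ``no shortcut through a small set'' feature of the Cheeger--Colding--Tian regularity theory, now in the $L^p$-Ricci setting of \cite{TiZha13}; it amounts to a transversality argument against the $C^\alpha$ structure on $\mathcal{R}$, combined with the weak convexity of $\mathcal{R}$ to perform the preliminary reduction from $\gamma$ to a curve contained in $\mathcal{R}$. Once this technical input is in place, the remaining surjectivity and continuity pieces fit together routinely.
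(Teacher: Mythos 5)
Your outline is the standard Cheeger--Colding-type argument and is essentially the one the paper appeals to: a $1$-Lipschitz extension of the locally isometric embedding, surjectivity from density plus compactness of the completion (Theorem \ref{a prior theorem}(2)), and the reverse inequality by showing a $d_\infty$-minimizing curve between points of $\iota(M\setminus E)$ can be replaced by a curve in $\iota(M\setminus E)$ of almost the same length, using $\codim\bigl(\mathcal{R}\setminus(M\setminus E)\bigr)\ge 2$. The difference is in how the ``no shortcut through the bad set'' step is implemented. The paper does not invoke convexity of $\mathcal{R}$ at all: it cites \cite{ChCo00} and \cite{RoZh11} and supplies the one ingredient those arguments need in the $L^p$-Ricci setting, namely a local isoperimetric constant estimate obtained by Croke's argument \cite{Cr80} from the Petersen--Wei volume comparison (\ref{volume comparison: 2}); this gives the quantitative control on curves approaching the singular/exceptional set. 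You instead reduce to $\mathcal{R}$ via the convexity statement in Theorem \ref{ChCo}(2) (a Colding--Naber-type fact, so legitimate here) and then push the curve off the analytic set $A$ inside the $C^\alpha$-regular part. That route works, but two small corrections: the avoidance step is not really a transversality/Sard argument --- transversality alone gives no length control --- it is a Fubini-type measure argument over parallel translates in bi-Lipschitz (harmonic or holomorphic) charts, which is available precisely because $\gamma$ is a compact subset of the open set $\mathcal{R}$ so the harmonic radius is bounded below along it and $\mathcal{H}^{2n-1}(A)=0$; and you should note that staying inside $\mathcal{R}$ under the perturbation uses this same compactness. With those adjustments your proof is complete and arguably more self-contained than the paper's citation, at the cost of leaning on the convexity of $\mathcal{R}$, which the paper's isoperimetric route avoids.
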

\begin{proof}
The lemma follows from the fact that $\codim\big(\mathcal{R}\backslash(M\backslash E)\big)\ge 2$ and the local isoperimetric constant estimate; see \cite{ChCo00} or \cite{RoZh11} for details. For the estimate of local isoperimetric constant, one can apply the same argument as Croke \cite{Cr80} by using the volume comparison of geodesic balls by Petersen-Wei (cf. \cite[Corolarry 2.4]{TiZha13} or (\ref{volume comparison: 2})).
\end{proof}

By the compactness of the limit space by Song \cite{So14-1}, see Theorem \ref{a prior theorem} (2) above, the diameters of the sequence $(M,\omega(t_i))$ are uniformly bounded.

\begin{lemm}
The K\"ahler-Ricci flow $\omega(t)$ is uniformly noncollapsing in the sense that: there exists $\kappa=\kappa(n,\omega_0)>0$ such that
\begin{equation}\label{noncollapsing}
\vol_{\omega(t)}\big(B_{r,\omega(t)}(x)\big)\ge\kappa r^{2n},\,\forall x\in M,r\le 1.
\end{equation}
\end{lemm}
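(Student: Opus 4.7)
The plan is to obtain the noncollapsing by a single application of the Petersen--Wei comparison (Corollary~2.2 of Section~2), with base point the fixed regular reference point $x_0\in M\setminus E$. The three inputs I need, all holding uniformly along the sequence $t_i$, are: the unit-ball volume lower bound $\vol_{\omega(t_i)}(B_1(x_0))\ge v_1>0$ (already noted just before the preceding lemma, a direct consequence of the smooth convergence $\omega(t_i)\to\omega_\infty$ on $M\setminus E$ from Theorem~\ref{a prior theorem}(1)); the diameter bound $\diam_{\omega(t_i)}(M)\le D$ (from Song's compactness, quoted just above); and a vanishing integral Ricci tail, furnished by estimate~(\ref{L4 estimate: 11}).

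To prepare the Petersen--Wei hypothesis I would take $\Lambda=-\tfrac{1}{m-1}$ with $m=2n$, so that $(m-1)\Lambda g=-g$ and $(\Ric-(m-1)\Lambda g)_-=(\Ric+g)_-$. A pointwise calculation in K\"ahler geometry gives $|(\Ric+g)_-|_g\le c_n|\Ric_\omega+\omega|_\omega$ for a dimensional constant $c_n$, so the K\"ahler $L^p$ bound (\ref{L4 estimate: 11}) with $p=7/2$ yields
\begin{equation*}
\int_M \big|(\Ric+g)_-\big|^{7/2}\,dv_{\omega(t_i)}\longrightarrow 0\quad\text{as}\ i\to\infty.
\end{equation*}
The exponent $p=7/2$ is chosen so that it simultaneously satisfies $p>m/2\ge n$ (required by Petersen--Wei for $n\le 3$) and $p<4$ (required by (\ref{L4 estimate: 11})).

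For any $x\in M$, set $d=\dist_{\omega(t_i)}(x,x_0)\le D$, so $B_{2d+1}(x_0)\supseteq M$. The smallness condition (\ref{Ricci condition: 1}) of Corollary~2.2 reduces to
\begin{equation*}
\int_M \big|(\Ric+g)_-\big|^{7/2}\,dv_{\omega(t_i)}\;\le\;\frac{\varepsilon\,v_1}{\vol(B^\Lambda_{D+1})},
\end{equation*}
which is satisfied for all sufficiently large $i$. The conclusion (\ref{volume comparison: 3}) of the corollary then delivers
\begin{equation*}
\vol_{\omega(t_i)}(B_r(x))\;\ge\;\frac{v_1}{2\,\vol(B^\Lambda_{D+1})}\,r^{2n}\;=:\;\kappa\,r^{2n}
\end{equation*}
for every $x\in M$ and $r\le 1$, with $\kappa=\kappa(n,\omega_0)>0$.

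The main obstacle is essentially bookkeeping: aligning the Riemannian shift by $\Lambda g$ in Petersen--Wei with the K\"ahler shift by $\omega$ that appears in (\ref{L4 estimate: 11}), and selecting $p$ so that both $p>m/2$ and $p<4$ hold---possible precisely because $n\le 3$. A minor secondary point is that if $x_0$ is too close to $E$ under $\omega_\infty$ for the literal unit-ball bound to hold, one can replace the radius $1$ throughout by a smaller fixed scale $r_0>0$ on which smooth convergence near $x_0$ is available, rerun Corollary~2.2 at that scale, and absorb the loss into $\kappa$.
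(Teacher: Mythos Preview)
Your argument is correct along the special sequence $t_i$ satisfying \eqref{L4 estimate: 6}, but the lemma asserts noncollapsing for \emph{every} time $t\ge 0$, and this is exactly how it is used later (in Proposition~\ref{pro:zzl-5} the noncollapsing \eqref{noncollapsing} is applied to an arbitrary sequence $t_j\to\infty$). Your two essential inputs---the smallness of $\int_M|\Ric+\omega|^{7/2}\omega^n$ and the diameter bound---are only available along such a good sequence: estimate \eqref{L4 estimate: 11} is a \emph{spacetime} bound $\int_t^{t+1}\!\int_M$, so it guarantees smallness of the spatial integral only at \emph{some} time in each unit interval, not at every time; and the diameter bound quoted just before the lemma is obtained from the Gromov--Hausdorff convergence of $(M,\omega(t_i))$, hence again only along $t_i$. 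There is no a priori way to propagate a Petersen--Wei volume bound from a good time $t'$ to a nearby time $t$ without pointwise Ricci control on the intervening flow, which you do not have.

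The paper closes this gap with a genuinely different tool: Perelman's $\mu$-functional monotonicity. Arguing by contradiction, one assumes collapsing along some sequence $t_i\to\infty$, then selects nearby good times $t_i'\in[t_i-2,t_i-1]$ where the spatial $L^p$ Ricci integral is small. At these $t_i'$ one has Gromov--Hausdorff convergence to the compact limit $M_\infty$, hence uniform bounds on scalar curvature, diameter, and Sobolev constant, which force $\mu(\omega(t_i'),\tau)\ge -C$ for $\tau$ in a fixed range. Perelman's monotonicity then transports this lower bound forward to time $t_i$, contradicting the assumed collapsing (which would drive $\mu$ to $-\infty$). In short, your Petersen--Wei approach handles the ``good'' times directly, but the passage from good times to all times requires the Ricci-flow monotonicity that you have omitted.
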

\begin{proof}
The lemma follows from Perelman's noncollapsing estimate to Ricci flow \cite{Pe02} together with the scalar curvature estimate by Z. Zhang \cite{Zh09}. Suppose that
\begin{equation}\label{collapsing}
r_i^{-2n}\vol_{\omega(t_i)}\big(B_{r_i,\omega(t_i)}(x_i)\big)\rightarrow 0
\end{equation}
for a sequence of times $t_i\rightarrow\infty$ and $x_i\in M$, $r_i\le 1$. Choose $t_i'\in[t_i-2,t_i-1]$ such that (\ref{L4 estimate: 6}) hold at $t_i'$. Then by above lemma we have $(M,\omega(t_i'))$ converges in the Gromov-Hausdorff topology to the unique limit $(M_\infty,d_\infty)$. In particular we have
$$R(\omega(t_i'))\le C,\,\diam(M,\omega(t_i'))\le C,\,C_S(M,\omega(t_i'))\le C$$
for some $C$ independent of $i$, where $R$ is the scalar curvature, $C_S$ is the Sobolev constant.

Let $\tilde{\omega}_i(\tilde{t})=(1+\tilde{t})\omega\big(t_i'+\log(1+\tilde{t})\big)$ be the solution to the Ricci flow
\begin{equation}
\frac{\partial}{\partial\tilde{t}}\tilde{\omega}_i=-\Ric(\tilde{\omega}_i),\,\tilde{\omega}_i(0)=\omega(t_i').
\end{equation}
Under this rescalings, the metric $\omega(t_i)=(1+\tilde{t}_i)^{-1}\tilde{\omega}_i(\tilde{t}_i)$ for some $\tilde{t}_i\in[e-1,e^2-1]$ and $B_{r_i,\omega(t_i)}(x_i)=B_{\tilde{r}_i,\tilde{\omega}(\tilde{t}_i)}(x_i)$ for some $\tilde{r}_i\le e$. Recall the $\mu$ functional of Perelman \cite{Pe02}
$$\mu(g,\tau)=\inf\int_M\big[\tau(R+|\nabla f|^2)+f-2n\big](4\pi\tau)^{-n}e^{-f}dv_g$$
for any Riemannian metric $g$ and $\tau>0$, where the infimum is taken over all $f\in C^\infty(M;\mathbb{R})$ with restriction $\int_M(4\pi\tau)^{-n}e^{-f}dv_g=1$. The condition (\ref{collapsing}) implies that $\mu(\tilde{\omega}_i(\tilde{t}_i),\tilde{r}_i^2)\rightarrow -\infty$ as $i\rightarrow \infty$; see \cite[Section 4.1]{Pe02}. Then Perelman's monotonicity formula shows $\mu(\tilde{\omega}_i(0),\tilde{t}_i+\tilde{r}_i^2)\rightarrow -\infty$ as $i\rightarrow \infty$, where $\tilde{t}_i+\tilde{r}_i^2\in[e-1,2e^2-1]$. But this can never happen because of the lower estimate of $\mu$ (cf. the estimate in \cite{Zha07}):
\begin{equation}
\mu(\tilde{\omega}_i(0),\tau)\ge\tau\inf R(\tilde{\omega}_i(0))-\frac{n}{2}\ln\tau-n\ln C_S(\tilde{\omega}_i(0))-C(n),\,\forall\tau\ge\frac{n}{8}.
\end{equation}
So (\ref{collapsing}) can not hold, the lemma is proved.
\end{proof}

%Applying Perelman's monotonicity of the $\mu$ functional and some classical analysis as in \cite{Zh07} one can also show

%\begin{lemm}
%The Sobolev constant of $(M,\omega(t))$ admits a uniform upper bound, i.e.,
%\begin{equation}
%\bigg(\int_M f^{\frac{2n}{n-1}}\omega(t)^n\bigg)^{\frac{n-1}{n}}\le C\int_M\big(|\nabla f|^2+f^2\big)\omega(t)^n,\,\forall t\ge 0,f\in C^\infty(M;\mathbb{R}),
%\end{equation}
%where $C$ is independent of $t$.
%\end{lemm}

The global Cheeger-Gromov convergence is a direct corollary of the following proposition.

\begin{prop}
\label{pro:zzl-5}
For any sequence $t_j\rightarrow\infty$, $(M,\omega_{t_j})$ converges along a subsequence in the Cheeger-Gromov sense to the limit $(M_\infty,d_\infty)$.
\end{prop}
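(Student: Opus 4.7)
The plan is to extract a Gromov--Hausdorff subsequential limit of $(M,\omega(t_j))$ and identify it with $(M_\infty,d_\infty)$ via a volume-matching argument.

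For \textbf{precompactness}, the uniform noncollapsing estimate (\ref{noncollapsing}) gives $\vol_{\omega(t)}(B_{r,\omega(t)}(x))\ge\kappa r^{2n}$ at scale $r\le 1$ for all $t$, and the total volume $V(t)=\int_M\omega(t)^n=[\hat\omega_t]^n$ is cohomologically bounded with $V(t)\to[\chi]^n$. A disjoint unit-ball packing argument then yields a uniform diameter bound on $(M,\omega(t))$, so Gromov precompactness produces a subsequence (still denoted $(t_j)$) with $(M,\omega(t_j))\to(N,d_N)$ in the Gromov--Hausdorff topology.

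For the \textbf{isometric embedding} $M_\infty\hookrightarrow N$, Theorem~\ref{a prior theorem}(1) gives smooth convergence $\omega(t)\to\omega_\infty$ on compact subsets of $M\setminus E$, which produces a natural $1$-Lipschitz map $\iota:(M\setminus E,\omega_\infty)\to(N,d_N)$. Completeness of $(N,d_N)$ together with the metric-completion identification established for $M_\infty$ extends $\iota$ to a $1$-Lipschitz map $\iota:M_\infty\to N$. To upgrade $\iota$ to an isometry, I would pick for each $j$ a good nearby time $s_j\in[t_j-1,t_j]$ satisfying $\int_M|\Ric_{\omega(s_j)}+\omega(s_j)|^{7/2}\omega(s_j)^n\to 0$ (available by Chebyshev applied to (\ref{L4 estimate: 11})); the lemmas preceding Proposition~\ref{pro:zzl-5} then give $(M,\omega(s_j))\to(M_\infty,d_\infty)$ in the Cheeger--Gromov sense, so for $x,y\in M\setminus E$ one has $\dist_{\omega(s_j)}(x,y)\to\dist_{\omega_\infty}(x,y)$ with no shortcut loss through $E$. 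A short-time distance-distortion estimate under the K\"ahler--Ricci flow on $[s_j,t_j]$, using the uniform $L^4$ Ricci bound (\ref{L4 estimate: 1}) and the uniform scalar curvature bound of Z.~Zhang, transfers this to $\omega(t_j)$, making $\iota$ an isometric embedding.

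For the \textbf{volume matching}, Theorem~\ref{ChCo}(1) applied along $s_j$ gives $\mathcal{H}^{2n}(M_\infty)=\lim V(s_j)=[\chi]^n$. Lower semicontinuity of Hausdorff measure under non-collapsed Gromov--Hausdorff limits yields $\mathcal{H}^{2n}(N)\le\liminf V(t_j)=[\chi]^n=\mathcal{H}^{2n}(M_\infty)$. Combined with the isometric embedding $\iota(M_\infty)\subseteq N$, this forces $\mathcal{H}^{2n}(N\setminus\iota(M_\infty))=0$. Yet $N\setminus\iota(M_\infty)$ is open (since $\iota(M_\infty)$ is compact, hence closed in $N$), and any nonempty open ball in $N$ carries positive $\mathcal{H}^{2n}$-measure by the inherited noncollapsing. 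Therefore $N\setminus\iota(M_\infty)=\emptyset$, so $N=M_\infty$, proving the proposition.

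The main obstacle is the distance-distortion estimate needed to promote $\iota$ to an isometry: the $L^4$ Ricci bound is only uniform in a time-integrated sense, so controlling $\dist_{\omega(t)}(x,y)$ for fixed $x,y\in M\setminus E$ over the full length-$1$ window $[s_j,t_j]$ requires combining the integrated curvature bound with a Perelman-type pseudolocality argument or a Hamilton-type distance lemma adapted to the integral curvature setting. Without such a step, $\iota$ is only $1$-Lipschitz and the chain $M_\infty\subseteq N$, $\mathcal{H}^{2n}(M_\infty)=\mathcal{H}^{2n}(N)$ does not close.
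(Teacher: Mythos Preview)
Your architecture matches the paper's: pick nearby ``good'' times, take the limit there, and compare with the limit along $t_j$. The gap you flag is real, but the fix you propose (an integral-curvature distance lemma or pseudolocality over a length-$1$ window) points in the wrong direction; the paper's resolution is both simpler and uses ingredients you already have.

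Two observations close the gap. First, since $\int_{t-1}^t\int_M|\Ric+\omega|^{7/2}\to 0$, a Chebyshev splitting lets you choose $t_j'\in[t_j-\varepsilon_j,t_j]$ with $\varepsilon_j\to 0$ \emph{and} $\int_M|\Ric_{\omega(t_j')}+\omega(t_j')|^{7/2}\to 0$; the relevant window is not of length~$1$ but shrinks to zero. Second, the distance distortion over $[t_j',t_j]$ is controlled not by the $L^4$ Ricci bound but by the \emph{pointwise} Ricci bound available away from $E$ via the smooth convergence of Theorem~\ref{a prior theorem}(1). The paper fixes a compact $K\subset M\setminus E$ that is $\delta$-dense in every $(M,\omega(t))$, uses convexity of the regular set in $M_\infty$ to keep minimal $\omega(t)$-geodesics between points of $K$ uniformly away from $E$, and then applies Perelman's distance-derivative estimate (\cite{Pe02}, Lemma~8.3) with this pointwise bound over the vanishing window $[t_j',t_j]$. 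This shows directly that the identity map is a $\delta_j$-Gromov--Hausdorff approximation between $(M,\omega(t_j'))$ and $(M,\omega(t_j))$ with $\delta_j\to 0$, so the two limits coincide---no pseudolocality and no volume-matching are needed. Incidentally, your volume-matching endgame has its own gap: along the raw sequence $t_j$ you only have (\ref{noncollapsing}), not the hypothesis (\ref{Ricci condition: 3}) of Theorem~\ref{ChCo}, so the volume convergence $\vol_{\omega(t_j)}(B_r)\to\mathcal{H}^{2n}(B_r)$ on $N$ that you invoke is unjustified.
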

\begin{proof}
By the regularity of manifolds with $L^p$ bounded Ricci curvature, Theorem \ref{ChCo}, and the uniform $L^p$ estimate of Ricci curvature under the K\"ahler-Ricci flow, (\ref{L4 estimate: 1}) and (\ref{L4 estimate: 11}), we can find another sequence of times $t_j'$ such that
$$t_j-\varepsilon_j\le t_j'\le t_j$$
where $\varepsilon_j\rightarrow 0$ as $j\rightarrow \infty$, and
$$(M,\omega_{t_j'})\stackrel{d_{GH}}{\longrightarrow}(M_\infty,d_\infty)$$
along a subsequence. On the other hand, by Gromov precompactness theorem \cite{Gr} together with the local noncollapsing estimate (\ref{noncollapsing}), after passing to a subsequence, the manifolds $(M,\omega(t_j))$ also converge in the Gromov-Hausdorff topology to a compact limit
$$(M,\omega(t_j))\stackrel{d_{GH}}{\longrightarrow}(M_\infty',d_\infty').$$
It is remained to show that $(M_\infty',d_\infty')$ is isometric to $(M_\infty,d_\infty)$. Actually we have

\begin{clai}
There is a sequence of positive numbers $\delta_j\rightarrow 0$ as $j\rightarrow\infty$ such that the identity map defines a $\delta_j$-Gromov-Hausdorff approximation from $(M,\omega(t_j'))$ to $(M,\omega(t_j))$.
\end{clai}
\begin{proof}[Proof of the Claim]
Recall that by the smooth convergence of $\omega(t)$ on $M\backslash E$ and uniform volume noncollapsing (\ref{noncollapsing}) we have for any $\delta>0$ one compact subset $K\subset M\backslash E$ and $\epsilon>0$, $j_0>>1$ such that
\begin{equation}\label{e11}
\epsilon\le\inf_t\dist_{\omega(t)}(K,E)\le\sup_t\sup_{x\in E}\dist_{\omega(t)}(x,K)\le\delta,
\end{equation}
and,
\begin{equation}\label{e12}
\vol_{\omega(t)}(M\backslash K)\le\delta,\,\forall t\ge t_{j_0}.
\end{equation}
The later can be seen by simply the derivative estimate to volume
$$\frac{d}{dt}\int_{M\backslash K}\omega(t)^n=-\int_{M\backslash K}(\inf R+n)\omega(t)^n\le C(\omega_0)e^{-t}\int_{M\backslash K}\omega(t)^n$$
where we used the estimate for the scalar curvature (\ref{scalar curvature}). For example, one may choose $j_0$ and $K$ such that $C(\omega_0)e^{-j_0}\le\ln 2$ and $\vol_{\omega(t_{j_0})}(M\backslash K)\le\frac{\delta}{2}$, then the volume estimate holds for any $t\ge t_{j_0}$. Moreover, by the convexity of the regular set $M\backslash E$ in $M_\infty$ we may assume in addition that any minimal geodesic in $(M,\omega(t))$ with endpoints in $K$ does not intersect the $\epsilon$ neighborhood of $E$. Also observe that there exists $C=C(\epsilon)$ independent of $j$ such that
\begin{equation}\label{e13}
|\Ric(\omega(t))|(x)\le C,\,\forall \dist_{\omega(t)}(x,E)\ge\frac{\epsilon}{2}.
\end{equation}
Then, by the derivative estimate to distance function, cf. Lemma 8.3 in \cite{Pe02},
$$\frac{d}{dt}\dist_{\omega(t)}(x,y)\ge -2(2n-1)(C\epsilon+\epsilon^{-1}),\,\forall x,y\in K.$$
Thus, we obtain the distance lower variation estimate
$$\dist_{\omega(t_j)}(x,y)\ge\dist_{\omega(t_j')}(x,y)-2(2n-1)(C\epsilon+\epsilon^{-1})\varepsilon_j
\ge\dist_{\omega(t_j')}(x,y)-\sqrt{\varepsilon_j}$$
whenever $j$ is large enough. On the other hand, let $\gamma$ be any minimal geodesic in $(M,\omega(t_j'))$ connecting $x,y\in K$. By assumption $\dist_{\omega(t)}(\gamma,E)\ge\epsilon$ for any $t$, so
$$\frac{d}{dt}\int_\gamma|\dot{\gamma}|ds\le\int_\gamma|\Ric+\omega||\dot{\gamma}|ds\le (C+n)\int_\gamma|\dot{\gamma}|ds$$
which gives the upper estimate to the distance distortion
$$\dist_{\omega(t_j)}(x,y)\le e^{(C+n)\varepsilon_j}\dist_{\omega(t_j')}(x,y)\le\dist_{\omega(t_j')}(x,y)+\sqrt{\varepsilon_j}$$
whenever $j$ is large enough. Finally, (\ref{e11}) shows that $K$ is $\delta$-dense in any $(M,\omega(t))$. Combining with the upper and lower distance variation estimate we get that the identity map defines an $\delta$-Gromov-Hausdorff approximation between $(M,\omega(t_j))$ and $(M,\omega(t_j'))$ whenever $j$ is large enough such that $\sqrt{\varepsilon_j}\le\delta$. The claim is proved.
\end{proof}

Proposition \ref{pro:zzl-5} follows directly from the Claim.
\end{proof}

%\begin{rema}
%We can also apply the almost Einstein estimate {\rm(\ref{L4 estimate: 11})} to prove that $(M_\infty',d_\infty')$ is isometric to $(M_\infty,d_\infty)$.
%\end{rema}

We end the paper with a brief discussion about the algebraic structure of $M_\infty$. By Song's work \cite{So14-1}, the limit space $M_\infty$ coincides with the canonical model $M_{can}$, so it is a normal projective variety. On the other hand, using the K\"ahler-Ricci flow, we can 
produce a natural isomorphism from $M_\infty$ to $M_{can}$ through holomorphic sections of $K_M^\ell$ for some $\ell>>1$ such that $K_M^\ell$ is base point free, and consequently, give an alternative proof of the above result by J. Song.
This can be done by choosing a family of orthonormal basis of $H^0(M,K_M^\ell)$ with respect to the Hermitian metric $h(t)=\omega(t)^{-n\ell}$, say $\{s_{i,t}\}_{i=0}^{N_\ell}$
where $N_\ell=\dim H^0(M;K_M^\ell)-1$, which satisfies the ODE system
\begin{equation}
\frac{\partial}{\partial t}s_{i,t}=\sum_j b_{ij}(t)s_{j,t}.
\end{equation}
In order to preserve the orthonormal property we choose
\begin{equation}
b_{ij}=\bar{b}_{ji}=-\frac{\ell-1}{2}\int_M(R+n)\langle s_{i,t},s_{j,t}\rangle_{h(t)}\omega(t)^n.
\end{equation}

\begin{lemm}
There exists $C=C(\omega_0,\ell)$ such that
$$|b_{ij}|\le Ce^{-t}.$$
\end{lemm}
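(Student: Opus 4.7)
The aim is the estimate $|b_{ij}|\le Ce^{-t}$. By the pointwise Cauchy--Schwarz inequality $|\langle s_{i,t},s_{j,t}\rangle_{h(t)}|\le|s_{i,t}|_{h(t)}|s_{j,t}|_{h(t)}$ one has
\begin{equation*}
|b_{ij}|\le\frac{\ell-1}{2}\int_M|R+n|\cdot|s_{i,t}|_{h(t)}|s_{j,t}|_{h(t)}\,\omega(t)^n,
\end{equation*}
so the problem splits into (a) showing $\int_M|R+n|\,\omega(t)^n\le Ce^{-t}$ and (b) the uniform pointwise bound $\sup_M|s_{i,t}|_{h(t)}^2\le C$, both with constants independent of $t$ and $i$.

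For (a), I would recycle the cohomological identity already used in the proof of $\int_t^{t+1}\int_M|\Ric+\omega|^2\omega^n\to 0$, namely $\int_M(R+n)\omega^n=ne^{-t}\int_M(\omega_0-\chi)\wedge\hat\omega_t^{n-1}$, which is $O(e^{-t})$ because $\hat\omega_t$ is bounded in cohomology. Combined with the one-sided envelope $R+n\ge-Ce^{-t}$ from (\ref{scalar curvature}) and the uniform boundedness of $\vol_{\omega(t)}(M)=\int_M\omega(t)^n$ (which depends only on the class $[\omega(t)]=e^{-t}[\omega_0]+(1-e^{-t})[\chi]$), this yields $\int_M(R+n)_-\omega^n\le Ce^{-t}$ and hence $\int_M|R+n|\omega^n\le Ce^{-t}$.

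For (b), the Hermitian metric $h(t)=(\det g_{\omega(t)})^{-\ell}$ on $K_M^\ell$ has curvature proportional to $\Ric(\omega(t))$, so the Bochner identity for any holomorphic $s\in H^0(M,K_M^\ell)$ reads $\Delta_{\omega(t)}|s|^2_{h(t)}=|\nabla s|^2_{h(t)}+\ell R(\omega(t))\cdot|s|^2_{h(t)}$. Z.~Zhang's uniform bound $|R|\le C$ along (\ref{KRF}) on minimal models then gives $\Delta|s|^2\ge-C|s|^2$. Three facts already established---the noncollapsing (\ref{noncollapsing}), the scalar curvature bound, and the diameter bound obtained from Song's Theorem~\ref{a prior theorem}(2) via the Cheeger--Gromov convergence proved above---together deliver a uniform Sobolev inequality along the flow in the sense of R.~Ye and Q.~Zhang. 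Standard Moser iteration on the compact manifold $M$ then yields $\sup_M|s|^2_{h(t)}\le C\int_M|s|^2_{h(t)}\omega(t)^n$ with a constant independent of $t$, and specialising to $s=s_{i,t}$ with the orthonormality constraint gives $\sup_M|s_{i,t}|^2_{h(t)}\le C$.

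The main obstacle is step (b): the uniform $C^0$ bound on an orthonormal basis is not a soft consequence of the setup, since keeping Moser iteration uniform in $t$ forces the simultaneous use of the scalar curvature bound, the noncollapsing, and the diameter bound. Each of these three inputs has already been proved in the paper, so the Bochner estimate plus Moser iteration goes through and (a) combined with (b) produces $|b_{ij}|\le Ce^{-t}$.
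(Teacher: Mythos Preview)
Your argument is correct, and part (a) matches the paper's exactly. For part (b), however, you take a substantially heavier route than the paper does.

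The paper observes that, since $u=\varphi+\varphi'$ satisfies the uniform $C^0$ bound (\ref{C0 estimate}), the Hermitian metric $h(t)=e^{-\ell u(t)}\Omega^{-\ell}$ is uniformly equivalent to the fixed metric $\Omega^{-\ell}$, and the volume form $\omega(t)^n=e^{u(t)}\Omega$ is uniformly equivalent to $\Omega$. Hence the $L^2$ inner product $\int_M\langle\,\cdot\,,\,\cdot\,\rangle_{h(t)}\omega(t)^n$ is uniformly equivalent to the fixed $L^2$ inner product at $t=0$, so the orthonormal basis $\{s_{i,t}\}$ has uniformly bounded $L^2$ norm with respect to the \emph{fixed} reference data. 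A single $L^\infty$--$L^2$ estimate for holomorphic sections at $t=0$ then gives $\sup_M|s_{i,t}|_{h(t)}\le C$ for all $t$.

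Your approach instead runs Bochner plus Moser iteration with respect to the moving metric $\omega(t)$, and therefore needs a uniform Sobolev constant along the flow, which you extract from the scalar curvature bound, the noncollapsing (\ref{noncollapsing}), and the diameter bound coming from Proposition~\ref{pro:zzl-5}. This works, but it invokes the full analytic machinery developed earlier in Section~3.2 (including the Cheeger--Gromov convergence itself), whereas the paper's argument needs only the elementary potential estimate (\ref{C0 estimate}) and is independent of the dimension restriction $n\le 3$. The trade-off is that your method would transfer to situations where the potential is not uniformly bounded but the geometric quantities are; here, though, the simpler route is available.
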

\begin{proof}
First of all we notice that the section $s_{i,t}$ admits a uniform $L^\infty$ bound. This can be seen by the uniform equivalence of the Hermitian metric $h(t)=e^{-\ell u(t)}\Omega^{-\ell}$, the volume form $\omega(t)^n=e^{u(t)}\Omega$ and the $L^\infty$ estimate of holomorphic sections at time $t=0$. Here we used the uniform $C^0$ estimate of $u$ (\ref{C0 estimate}). Then we can estimate the integration as in the proof of Lemma 3.7, by $R+n\ge-Ce^{-t}$ and $\int(R+n)\omega^n\le Ce^{-t}$,
$$|b_{ij}|\le C\int|R+n|\omega^n\le C\int(R+n+Ce^{-t})\omega^n+Ce^{-t}\le Ce^{-t}$$
where $C=C(\omega_0,\ell)$.
\end{proof}

Suppose $s_{i,t}=\sum_ja_{ij}(t)s_{j,0}$ and denote by $A(t)=(a_{ij}(t))$ a Hermitian matrix, then $A(t)=e^{\int_0^tB(s)ds}$ where $B(t)=(b_{ij}(t))$. Thus the sections $\{s_{i,t}\}_{i=0}^{N_\ell}$ converge to a set of holomorphic sections $\{s_{i,\infty}\}_{i=0}^{N_\ell}$ which forms another basis of $H^0(M;K_M^\ell)$. It induces a morphism
$$\Phi:M\rightarrow M_{can}\subset\mathbb{C}P^{N_\ell}.$$
On the other hand, we also have a uniform gradient estimate to each $s=s_{i,0}$, cf. \cite{So14-1},
$$|\nabla^{h(t)} s|_{h(t)\otimes\omega(t)}=|\nabla^{\Omega^{-\ell}} s+\partial u\otimes s|_{h(t)\otimes\omega(t)}
\le C|\nabla^{\Omega^{-\ell}} s|_{\Omega^{-\ell}\otimes\chi}+C|\partial u|_{\omega(t)}|s|_{\Omega^{-\ell}}\le C$$
where we used the uniform $C^1$ estimate of $u$ and $\chi\le C\omega(t)$, this leads to a convergence of $\{s_{i,t}\}_{i=0}^{N_\ell}$ under the Cheeger-Gromov convergence, so $\{s_{i,\infty}\}_{i=0}^{N_\ell}$ can also be seen as a set of holomorphic sections of $K_{M_\infty}^\ell$. It is obvious that $\{s_{i,\infty}\}_{i=0}^{N_\ell}$ is base point free on $M_\infty$, so it defines a map
$$\Phi_\infty:M_\infty\rightarrow\mathbb{C}P^{N_\ell}.$$
Finally, through construction of local peak sections, up to rising a power of $\ell$, the map $\Phi_\infty$ separates points of $M_\infty$, so it defines a homeomorphism.

\bibliographystyle{amsalpha}

\end{document}